\def\IN{{\mathbb N}}
\def\dsl{\textstyle\sum\limits}
\def\pr{\textstyle\prod\limits}
\def\n{\noindent}
\def\dis{\displaystyle}
\def\r{\rightarrow}
\def\wt{\widetilde}
\def\o{\omega}
\def\O{\Omega}
\def\cN{{\cal N}}
\def\cZ{{\cal Z}}
\def\cO{{\cal O}}
\def\cZ{{\cal Z}}
\def\cP{{\cal P}}
\def\FF{{\mathbb{F}}}
\newtheorem{theorem}{Theorem}[section]
\newtheorem{lemma}[theorem]{Lemma}
\newtheorem{corollary}[theorem]{Corollary}
\newtheorem{proposition}[theorem]{Proposition}
\newtheorem{definition}[theorem]{Definition}
\newtheorem{remark}[theorem]{Remark}
\begin{document}
\title{\Large \bf Topological finite generation of compact open subgroups of universal groups}

\author[1]{Marc Burger}
\author[2]{Shahar Mozes
\footnote{M.B. thanks the hospitality of the IMS of the National University of Singapore, S.M. 
acknowledges the support of ISF grants 1003/11 and 2095/15. 
We both thank the hospitality of the University of Newcastle
 and the ANSI program "Winter of disconnectedness". We thank
 the Isaac Newton Institute for Mathematical Sciences for support and hospitality during the  
 program on "Non-positive curvature group action and cohomology".
 This work was supported by EPSRC Grant Number EP/K032208/1.
 \newline
\emph{AMS Subject classification 2010:} 20E18, 20E08, 22D05
}
}
\affil[1]{ETH Z\"urich}
\affil[2]{The Hebrew University of Jerusalem}
\date{}
\maketitle
This paper is part of our ongoing study of the structure of co-compact irreducible lattices in the product of two regular trees, see \cite{BuMo1}, \cite{BuMo2}, \cite{BuMoZ}, \cite{Ra1}, \cite{BuMo3}. The specific result obtained here, see Corollary \ref{cor0.2}, is motivated by the following question: Let $T_d = (V,E)$ be the $d$-regular tree, $F < {\rm Sym}_d$ a permutation group on $d$ letters and $U(F) < {\rm Aut} \,T_d$ the universal group attached to it; see \cite{BuMo1}, for definitions and properties. When is $U(F)$ the closure of the projection of a co-compact lattice $\Gamma < U(F) \times G$, where $G$ is locally compact, compactly generated? Work of D. Rattaggi (using \cite{BuMo2}) implies that this is so when $F= A_{2 n}$ is the alternating group with $2n \ge 6$, or $F = M_{12}$ is the $12$-Matthieu group, or $F = ASL_3(2$) is the special affine group on $\FF^3_2$; in addition there are co-compact lattices in $U(A_{2n}) \times U(A_{2m})$ with dense projections for $n \ge 13$, $m \ge 9$.

\medskip
Our starting observation is that if $\Gamma < U(F) \times G$ is a co-compact lattice with $G$ as above, then any compact open subgroup of $U(F)$ is topologically finitely generated. Our main result gives a characterization of the permutation groups $F$ for which this property holds. In fact, this result will follow from a similar result concerning the finite generation of iterated wreath products. More precisely, let $D$ be a finite set and $L < {\rm Sym} (D)$ a permutation group. Let $L_n: = L  \wr \dots \wr L$ be the $n$'th iterated wreath product of $L$ acting on $D$, and $L_\infty: = \lim\limits_{\leftarrow} L_n$ the projective limit.
Note that $L_\infty$ has a natural action on the rooted tree whose vertices are labeled by words over $D$ (the root is the empty word).

\begin{theorem}\label{theo0.1}
The profinite group $L_\infty$ is topologically finitely generated if and only if the two following conditions are satisfied:
\begin{itemize}
\item[{\rm a)}] $L$ is perfect.

\item[{\rm b)}] Every $L$-orbit in $D$ has at least two elements.
\end{itemize}

\medskip\n
If these conditions are satisfied, then $L_\infty$ is positively finitely generated.
\end{theorem}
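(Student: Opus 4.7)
I treat necessity and sufficiency in turn, using throughout that a profinite group $G$ satisfies $d(G)\le s$ iff $d(G/N)\le s$ for every open normal $N\trianglelefteq G$, so that finite generation of $L_\infty$ can be tested on the finite quotients $L_n$. For necessity, if $L$ is not perfect, the standard identity $(A^X\rtimes H)^{\rm ab}\simeq (A^{\rm ab})^{|X/H|}\times H^{\rm ab}$ applied to the decomposition $L_n=L_{n-1}^D\rtimes L$ gives the recursion $L_n^{\rm ab}\simeq (L_{n-1}^{\rm ab})^k\times L^{\rm ab}$, where $k$ is the number of $L$-orbits in $D$; choosing a prime quotient of $L^{\rm ab}$ and iterating shows that the rank of the finite abelian group $L_n^{\rm ab}$ grows without bound, so $L_\infty$ cannot be topologically finitely generated. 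If instead $L$ fixes some point $d_0\in D$, then the vertex $d_0^m$ is fixed by $L_\infty$ at every level $m$, and reading off the local permutation at that vertex yields a homomorphism $L_n\to L$ for each $0\le m<n$; portraits supported on the ray $d_0^\infty$ realise any prescribed tuple, so the assembled map $L_n\twoheadrightarrow L^n$ is surjective, and classical bounds of P.~Hall and Wiegold give $d(L^n)\to\infty$ for any non-trivial $L$, again preventing topological finite generation.

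For sufficiency I assume (a) and (b), and prove the stronger statement that $L_\infty$ is positively finitely generated by exhibiting an integer $s$ and a constant $\delta>0$ such that the Haar-probability that $s$ independent uniform elements of $L_n$ generate $L_n$ is at least $\delta$, uniformly in $n$. This is established inductively using the exact sequence $1\to K_n\to L_n\to L_{n-1}\to 1$ with $K_n\simeq L^{D^{n-1}}$: conditional on the images in $L_{n-1}$ forming a generating tuple, a random lift fails to generate $L_n$ exactly when it lies in a proper subgroup of $L_n$ projecting onto $L_{n-1}$, which by Goursat's lemma together with the perfectness of $L$ supplied by (a) corresponds to a maximal $L_{n-1}$-invariant proper subgroup of $K_n$. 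Such subgroups have the form $M^O\times L^{D^{n-1}\setminus O}$ for a maximal normal $M\triangleleft L$ with (non-abelian) simple quotient $S=L/M$ and an $L_{n-1}$-orbit $O$ on $D^{n-1}$.

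The main technical obstacle is a uniform lower bound on the lifting probability, and this is exactly where condition (b) enters. Since every $L$-orbit on $D$ has size at least $2$, an easy induction shows that every $L_{n-1}$-orbit $O$ on $D^{n-1}$ has size at least $2^{n-1}$. The probability that $s$ random lifts all fall into a given maximal subgroup as above is then at most $|S|^{-s|O|}\le |S|^{-s\cdot 2^{n-1}}$, and summing over the at most $k^{n-1}$ such orbits and the finitely many simple quotients of $L$ bounds the failure probability at step $n$ by a quantity decaying double-exponentially in $n$. A telescoping argument then yields $\prod_n p_n\ge\delta>0$ for $s$ sufficiently large, simultaneously giving topological and positive finite generation of $L_\infty$; the estimate also makes clear that (b) is sharp, as orbits of size one would keep the failure probabilities bounded below and non-summable.
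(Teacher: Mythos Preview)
Your necessity arguments are essentially those of the paper, using the alternative decomposition $L_n\cong L\ltimes L_{n-1}^D$; that part is fine.

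The sufficiency argument has a real gap: the description of the maximal subgroups of $L_n$ surjecting onto $L_{n-1}$ is incorrect. It is not true that every such $H$ has $H\cap K_n=M^{\mathcal O}\times L^{D^{n-1}\setminus\mathcal O}$ for some maximal normal $M\lhd L$ and some orbit $\mathcal O$. Indeed, that subgroup is not even a maximal $L_{n-1}$-invariant subgroup of $K_n$: the quotient $K_n/(M^{\mathcal O}\times L^{D^{n-1}\setminus\mathcal O})\cong S^{\mathcal O}$ (with $S=L/M$ simple) contains the $L_{n-1}$-invariant full diagonal $\Delta(S)$, whose preimage is strictly larger and still proper. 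More generally, products of subdiagonals of $S^{\mathcal O}$ along an $L_{n-1}$-invariant block system of $\mathcal O$ give genuine maximal subgroups of $L_n$ not on your list; when $|\mathcal O|$ is large these are numerous and must be counted. In addition, Goursat's lemma (which concerns direct products) does not yield a bijection $H\leftrightarrow H\cap K_n$ here: for a fixed value of $H\cap K_n$ there can be many maximal $H$, corresponding to distinct sections $L_{n-1}\to L_{n-1}\ltimes S^{\mathcal O}$, and bounding their number is in fact the most delicate step. The paper follows the same Bhattacharjee strategy you sketch, but replaces your one-line classification by a four-case analysis of clean maximal subgroups of $X\ltimes B^{\Omega}$ surjecting onto $X$ (graphs of isomorphisms between two orbits; products of subdiagonals along block systems; normalizers $\mathcal N_Y(T^{\Omega})$ for proper $T<B$; and complements $M\cap B^{\Omega}=e$), with separate index and multiplicity estimates in each case and a recursive bound on the number of sections for the last. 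Without these, your union bound on the failure probability is not justified.
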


\medskip
This result was announced in \cite{Mo98}. A version of the proof was written up in the thesis of O. Amann \cite{Am} following a manuscript by the authors. The proof presented here substantially simplifies the original one.

Applied to our situation, let $F_1 = {\rm Stab}_F(1)$, and $D = \{2, \dots, d\}$. Then for every $m \ge 1$, the pointwise stabilizer in $U(F)$ of the ball or radius $m \ge 1$ centered at any vertex is a finite direct product of $(F_1)_\infty = \lim_n (F_1 \wr \dots \wr F_1)$ to which our theorem applies and gives

\begin{corollary}\label{cor0.2}
The profinite group obtained by considering the stabilizer in $U(F)$ of a vertex is topologically finitely generated if and only if the following two conditions are satisfied:
\begin{itemize}
\item[{\rm a)}] $F_1$ is perfect.

\item[{\rm b)}] Any $F_1$ orbit in $\{2, \dots , d\}$ has at least two elements.
\end{itemize}

\medskip\n
Moreover, when those conditions are satisfied, any compact open subgroup of $U(F)$ is positively finitely generated.
\end{corollary}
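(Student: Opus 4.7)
The plan is to derive the corollary directly from Theorem \ref{theo0.1} using the structural observation recalled just before the statement. Fix a vertex $v_0$, set $K := U(F)_{v_0}$, and let $K_1 \le K$ be the pointwise stabilizer in $U(F)$ of the closed ball of radius $1$ about $v_0$. Then $K_1$ is open of finite index in $K$ (the quotient $K/K_1$ embeds in $F$), and by the remark recalled in the excerpt it decomposes as a direct product
\[
K_1 \;\cong\; \prod_{i=1}^{d} (F_1)_\infty,
\]
one factor for each subtree hanging off a neighbor of $v_0$. The whole question is thus reduced to the topological (respectively, positive) finite generation of the single profinite group $(F_1)_\infty$, to which Theorem \ref{theo0.1} applies with $L = F_1$ acting on $D = \{2,\dots,d\}$.

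For the ``only if'' direction I would argue that if $K$ is topologically finitely generated then so is every open subgroup, by a standard Schreier coset argument, hence $K_1$ is; projecting $K_1$ onto a single direct factor shows that $(F_1)_\infty$ is topologically finitely generated, and Theorem \ref{theo0.1} then forces conditions (a) and (b). Conversely, assuming (a) and (b), Theorem \ref{theo0.1} actually yields the stronger conclusion that $(F_1)_\infty$ is positively finitely generated, and a fortiori topologically finitely generated. A finite direct product of topologically finitely generated profinite groups is topologically finitely generated (just juxtapose the generating tuples), so $K_1$ is, and then so is $K$ since $K/K_1$ is finite.

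For the ``moreover'' clause I would upgrade the previous step from topological to positive finite generation using Mann's preservation theorems for positively finitely generated profinite groups: the class is closed under (i) finite direct products and (ii) extensions with finite kernel or finite quotient. Applied here, positive finite generation propagates $(F_1)_\infty \leadsto K_1 \leadsto K$. Finally, an arbitrary compact open subgroup $K'$ of $U(F)$ is commensurable with $K$, since $K' \cap K$ is open and of finite index in each (both being compact and open); passing through the normal core of $K' \cap K$ in $K'$ and invoking (i) and (ii) again yields that $K'$ is positively finitely generated as well.

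The one genuinely delicate point is the stability of positive finite generation under finite direct products: this is \emph{not} a formal consequence of positive finite generation of the factors (a naive product-measure argument does not work, because independent sampling on $G_1 \times G_2$ produces joint laws on the factors that may fall into proper ``diagonal'' subgroups with uncontrolled probability). I would import this, together with the companion extension result, from Mann's PFG paper rather than reprove it here; all the other ingredients in the plan are elementary.
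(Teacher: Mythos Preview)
Your proposal is correct and follows exactly the route the paper indicates: reduce to $(F_1)_\infty$ via the decomposition of ball stabilizers as finite direct products of copies of $(F_1)_\infty$, then apply Theorem~\ref{theo0.1}. The paper leaves the closure properties of positive finite generation (under finite direct products, finite-index over- and subgroups, and hence commensurability) implicit with a reference to \cite{Mann1}; you are right to flag the direct-product step as the only nontrivial ingredient and to import it from Mann rather than argue it ad hoc.
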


\medskip
Recall that a compact group $K$ is positively finitely generated if for some $\ell \ge 1$, the Haar measure of the subset of $\ell$-tuples in $K^\ell$ generating a dense subgroup of $K$ is positive. For background information concerning this property, we refer to \cite{Mann1}.
The question of finite generation of various infinite iterated wreath products was considered by several authors. See \cite{Bhatt}, \cite{Quick1}, \cite{Quick2}, \cite{Bondarenko}, \cite{Vann}. All these works consider wreath products of transitive group actions.

\medskip
Now we shortly discuss the necessity of the conditions in Theorem \ref{theo0.1} and to that end we start by fixing some notations used throughout this article. The permutation groups $L_n$ acting on $S_n : = D^n$ are defined inductively by $L_1 = L < {\rm Sym}(D)$ and $L_{n+1} = L_n \ltimes L^{S_n}$ where an element in $L_{n+1}$ is a pair $(x,f)$, $f$: $S_n \r L$ a map, and the product structure reads $(x_1,f_1)(x_2,f_2) = (x_1 x_2, f_1^{x_2} f_2)$, and $f^{x_2}(w) = f(x_2 w)$. The action of $L_{n+1}$ on $S_n \times D$ is given by $(x,f)(v,j) = (x(v), f(v) j)$.

\medskip
Now we show the necessity of the two conditions.

\begin{itemize}
\item[a)] If $\pi$: $L \r A$ is an abelian quotient of $L$, then
\begin{align*}
L_n \ltimes L^{S_n} & \longrightarrow A
\\
(x,f) & \longmapsto \pr_{v \in S_n} \pi\big(f(v)\big)
\end{align*}

\n
is a surjective homomorphism; by induction this implies that $A^n$ is a quotient of $L_n$ and hence $L_\infty$ admits $A^\IN$ as quotient. Thus, if $L$ is not perfect, $L_\infty$ is not topologically finitely generated.

\medskip
\item[b)] If $j \in D$ is $L$-fixed, then $(j,\dots,j) \in S_n$ is $L_n$-fixed and $\prod_{v \in S_n \atop v \not= (j,\dots,j)} L$ is a normal subgroup of $L_{n+1} = L_n \ltimes L^{S_n}$ with quotient $L_n \times L$. By recurrence this implies that $L^n$ is a quotient of $L_n$ and hence $L^\IN$ of $L_\infty$. But if $L \not= (e)$, the $L^\IN$ is not topologically finitely generated and hence $L_\infty$ not as well.
\end{itemize}

\medskip
The plan of the paper is the following: the proof of the converse of Theorem \ref{theo0.1} follows a strategy devised by Bhattacharjee in \cite{Bhatt}. Namely, if $p_k(G)$ denotes the probability for a $k$-tuple to generate the finite group $G$, we show that under the hypothesis of Theorem \ref{theo0.1}, $\lim_n p_k(L_n) > 0$ for some $k$. For this we use a result of Bhattacharjee (see \cite{Bhatt}), which we recall below, relating $p_k(L_{n+1})$ to $p_k(L_n)$ modulo a multiplication factor $(1 - \zeta_{L_{n+1}/L_n}(k-1))$ which is defined in terms of the conjugacy classes of maximal subgroups of $L_{n+1}$ surjecting onto $L_n$. The main work consists then in classifying these conjugacy classes.

\section{A result of M. Bhattacharjee}

We recall for the convenience of the reader the following:

\begin{proposition}\label{prop1.1} (\cite{Bhatt})
Let $\pi$: $Y \r X$ be a surjective homomorphism of finite groups and $k \ge 1$. Then
\begin{equation*}
p_k (Y) \ge \Big(1 - \dsl_{[M] \in G(Y|X)} \; \dis\frac{1}{[Y\! : \!M]^{k-1}}\Big) \,p_k(X)
\end{equation*}

\n
where the sum is over the set $G(Y|X)$ of $Y$-conjugacy classes of proper maximal subgroups of $Y$ surjecting onto $X$.
\end{proposition}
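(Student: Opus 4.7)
The plan is to compute the probability that a uniformly random $k$-tuple $(y_1, \ldots, y_k) \in Y^k$ generates $Y$ by conditioning on its image in $X^k$. Since $\pi$ is surjective, every fibre of the projection $Y^k \to X^k$ has cardinality $|\ker\pi|^k$, so the pushforward of the uniform distribution on $Y^k$ is the uniform distribution on $X^k$. Writing $A$ for the event that $(\pi(y_1), \ldots, \pi(y_k))$ generates $X$, and $E$ for the event that $(y_1, \ldots, y_k)$ generates $Y$, this gives $\Pr(A) = p_k(X)$; since clearly $E \subseteq A$, one has
\[
p_k(Y) \,=\, \Pr(E) \,=\, \Pr(A) - \Pr(A \setminus E) \,=\, p_k(X) - \Pr(A \setminus E).
\]

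Next I would analyse $A \setminus E$. A tuple in $A \setminus E$ is contained in some proper maximal subgroup $M < Y$ (because it fails to generate $Y$), and since its image already generates $X$ one must have $\pi(M) = X$. Hence
\[
A \setminus E \,\subseteq\, \bigcup_{M} \bigl(\{(y_1, \ldots, y_k) \in M^k\} \cap A\bigr),
\]
where $M$ runs over proper maximal subgroups of $Y$ surjecting onto $X$. The crucial ingredient is the value of $\Pr(\{(y_1, \ldots, y_k) \in M^k\} \cap A)$: conditionally on the tuple lying in $M^k$, it is uniform there, and since $\pi|_M \colon M \to X$ is again surjective, the same fibre-counting argument shows that its image is uniform on $X^k$. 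This yields $\Pr(A \mid (y_1, \ldots, y_k) \in M^k) = p_k(X)$, and consequently
\[
\Pr\bigl(\{(y_1, \ldots, y_k) \in M^k\} \cap A\bigr) \,=\, \frac{|M|^k}{|Y|^k}\, p_k(X).
\]

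To finish, I would group the maximal subgroups by $Y$-conjugacy and use $|[M]| = [Y : N_Y(M)] \leq [Y : M]$ to bound the number of conjugates in each class. Summing then yields
\[
\Pr(A \setminus E) \,\leq\, p_k(X) \sum_{[M] \in G(Y|X)} [Y : M] \cdot \frac{|M|^k}{|Y|^k} \,=\, p_k(X) \sum_{[M] \in G(Y|X)} \frac{1}{[Y : M]^{k-1}},
\]
which combined with the identity of the first paragraph gives the proposition. The subtle point, and what I expect to be the crux of the argument, is precisely the conditional computation producing the factor $p_k(X)$ in each summand: a naive union bound over maximal subgroups only gives $p_k(Y) \geq p_k(X) - \sum_{[M]} [Y:M]^{-(k-1)}$, which is strictly weaker than the claimed inequality. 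What upgrades the additive bound to the multiplicative form stated in the proposition is the self-similar observation that, conditionally on the tuple lying in a maximal subgroup $M$ surjecting onto $X$, one is in exactly the same situation as one started in, so the event $A$ still has conditional probability $p_k(X)$.
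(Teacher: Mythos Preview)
Your proof is correct and follows essentially the same route as the paper's: the paper phrases it as $p_k(Y)=p_k(Y|X)\,p_k(X)$ and bounds the complementary conditional probability $q_k(Y|X)$ by $\sum_{[M]\in G(Y|X)}[Y:M]^{-(k-1)}$ via the same union bound and the same normaliser inequality. The paper is terse precisely at the point you flag as the crux --- it writes $q_k(Y|X)\le\sum_M |M|^k/|Y|^k$ without comment --- and your conditional computation $\Pr(M^k\cap A)=\frac{|M|^k}{|Y|^k}\,p_k(X)$ is exactly what justifies that line.
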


\begin{proof}
We have $p_k(Y) = p_k(Y|X) \,p_k(X)$, where $p_k(Y|X)$ is the probability for a $k$-tuple to generate $Y$ given that its image in $X$ generates $X$. Then
\begin{equation*}
p_k(Y|X) = 1 - q_k(Y|X)
\end{equation*}

\n
where $q_k(Y|X)$ is the probability of the opposite event. Observing that a non-generating $k$-tuple is always contained in a proper maximal subgroup $M < Y$, we obtain:
\begin{align*}
q_k(Y|X) & \le \dsl_{M\lneq Y \;{\rm maximal}} \;\dis\frac{|M|^k}{|Y|^k}
\\[1ex]
& = \dsl_{[M] \in G (Y|X)} [Y\!:  \cN_Y(M)] \cdot \dis\frac{|M|^k}{|Y|^k}
\\[1ex]
&\le \dsl_{[M] \in G(Y|X)} \; \dis\frac{1}{[Y\!:\!M]^{k-1}}.
\end{align*}
\end{proof}

Denoting by $\zeta_{Y|X}(s) = \sum_{[M] \in G(Y|X)}\, \frac{1}{[Y \cdot M]^s}$, we conclude:

\begin{corollary}\label{cor1.2}
Assume that for some $k_1 \ge 2$, $\sum^\infty_{k=1} \zeta_{L_{n+1}/L_n}(k_1 - 1) < + \infty$. Let $n_1$ be such that $\zeta_{L_{n_1 + 1}/L_{n_1}}(k_1 - 1) < 1$ and let $k_2 \ge 2$ be such that $L_{n_1}$ is generated by $k_2$ elements. Then $L_\infty = \lim\limits_\leftarrow L_n$ is positively $k$-generated for $k \ge \max(k_1,k_2)$.
\end{corollary}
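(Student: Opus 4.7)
The plan is to iterate Proposition \ref{prop1.1} along the tower $\cdots \twoheadrightarrow L_{n+1} \twoheadrightarrow L_n \twoheadrightarrow \cdots$ starting from the given base level $n_1$, extract a uniform lower bound $\inf_n p_k(L_n) \ge c > 0$ for $k \ge \max(k_1,k_2)$, and then transfer this bound to the Haar measure on the profinite limit $L_\infty$.

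Fix $k \ge \max(k_1,k_2)$. The first observation is the monotonicity $s \mapsto \zeta_{Y|X}(s)$ is non-increasing on $[1,\infty)$, since every proper maximal $M \lneq Y$ satisfies $[Y:M] \ge 2$ so each term $[Y:M]^{-s}$ decreases in $s$. In particular $\zeta_{L_{n+1}/L_n}(k-1) \le \zeta_{L_{n+1}/L_n}(k_1-1)$, and Proposition \ref{prop1.1} applied to $L_{n+1} \twoheadrightarrow L_n$ yields
\begin{equation*}
p_k(L_{n+1}) \ge \bigl(1-\zeta_{L_{n+1}/L_n}(k_1-1)\bigr)\, p_k(L_n).
\end{equation*}
Since $\sum_n \zeta_{L_{n+1}/L_n}(k_1-1) < \infty$, the summands tend to $0$, so after enlarging $n_1$ if necessary I may assume $\zeta_{L_{n+1}/L_n}(k_1-1) < 1$ for \emph{every} $n \ge n_1$, not merely at $n = n_1$. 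Iterating the inequality from $n_1$ to $n-1$ and using that $p_k(L_{n_1}) > 0$ (because $L_{n_1}$ is finite and generated by $k_2 \le k$ elements, so the generating $k$-tuples form a nonempty subset of a finite set) gives
\begin{equation*}
p_k(L_n) \ge p_k(L_{n_1}) \prod_{j=n_1}^{n-1}\bigl(1-\zeta_{L_{j+1}/L_j}(k_1-1)\bigr).
\end{equation*}
The classical criterion that $\prod_n(1-a_n)$ with $a_n \in [0,1)$ converges to a strictly positive limit precisely when $\sum a_n < \infty$ then furnishes a constant $c > 0$ with $p_k(L_n) \ge c$ for all $n$.

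To conclude, the set $\Omega_k \subset L_\infty^k$ of $k$-tuples whose closed subgroup equals $L_\infty$ is the intersection, over $n$, of the decreasing clopen family $\pi_n^{-1}(\Omega_k^{(n)})$, where $\pi_n : L_\infty \to L_n$ is the canonical projection and $\Omega_k^{(n)} \subset L_n^k$ is the set of $k$-tuples generating $L_n$; here I use the standard fact that a tuple topologically generates a profinite group iff its image generates every finite quotient. Since the product Haar measure on $L_\infty^k$ projects to normalized counting measure on each $L_n^k$, the Haar measure of $\pi_n^{-1}(\Omega_k^{(n)})$ equals $p_k(L_n) \ge c$, and continuity of measure on decreasing sequences of clopen sets gives $\mu(\Omega_k) \ge c > 0$, which is exactly the statement that $L_\infty$ is positively $k$-generated. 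No single step is a serious obstacle; the mildly delicate point is extracting from the summability hypothesis (rather than from the single-index inequality at $n_1$) that the tail of the infinite product is positive, which is why I pass to a possibly larger $n_1$ at the outset.
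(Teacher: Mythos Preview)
Your argument is correct and follows the same route as the paper: iterate Proposition~\ref{prop1.1} from level $n_1$, use monotonicity of $\zeta_{Y|X}(s)$ to replace $k-1$ by $k_1-1$, and invoke summability to get a positive infinite product. You are simply more explicit than the paper about two points it leaves implicit---the passage from $\inf_n p_k(L_n)>0$ to the Haar-measure statement on $L_\infty^k$, and the need (shared by the paper's proof) that $\zeta_{L_{n+1}/L_n}(k_1-1)<1$ hold for all $n\ge n_1$ rather than just at $n_1$.
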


\begin{proof}
For every $k \ge k_1$ we have (Proposition \ref{prop1.1})
\begin{equation*}
\lim\limits_{n \r \infty} p_k(L_n) \ge \pr_{n \ge n_1} \big(1 - \zeta_{L_{n+1}/L_n}(k-1)\big) \cdot p_k (L_{n_1}).
\end{equation*}
Now observe that
\begin{equation*}
\pr_{n \ge n_1} \big(1 - \zeta_{L_{n+1} / L_n}(k-1)\big) \ge \pr_{n \ge n_1}  \big(1 - \zeta_{L_{n+1}/L_n}(k_1-1)\big) > 0
\end{equation*}

\medskip\n
and conclude by choosing $k \ge \max(k_1,k_2)$.
\end{proof}

\section{Maximal subgroups in wreath products}

Let $X$ be a finite group, $\Omega_1,\dots,\Omega_t$ transitive (non-empty) $X$-sets, $t \ge 1$ and $B_1,\dots, B_t$ non-trivial perfect groups; these will be our standing assumptions throughout this section. Let $Y = X \ltimes (B_1^{\Omega_1} \times \dots \times B_t^{\Omega_t})$ be the semi-direct product, where the action of $X$ is by permuting factors, and $\pi$: $Y \r X$ the projection map.

\begin{definition}\label{def2.1}
A standard normal subgroup of $Y$ is a subgroup of the form
\begin{equation*}
\pr^t_{i=1} N_i^{\O_i}
\end{equation*}
where $N_i \vartriangleleft B_i$
\end{definition}

Observe that every subgroup $M<Y$ contains a unique maximal standard normal subgroup of $Y$.

\begin{definition}\label{def2.2}
A subgroup $M < Y$ is clean if it contains no non-trivial standard normal subgroup.
\end{definition}

The following proposition summarizes the ingredients needed in the proof of Theorem \ref{theo0.1}; this proposition is a corollary of more precise statements proven in this section.

\begin{proposition}\label{prop2.3}
Let $M < Y$ be a clean, proper, maximal subgroup such that $\pi(M) = X$.

\medskip
Then one of the following holds:

\begin{itemize}
\item[{\rm 1)}] $t = 2$, $B_1, B_2$ are non-abelian simple and

\begin{itemize}
\item[{\rm (a)}] $M \cap (B_1^{\Omega_1} \times B_2^{\Omega_2})$ is the graph of an isomorphism $B_1^{\Omega_1} \r B_2^{\Omega_2}$.

\item[{\rm (b)}] $M = \cN_Y (M \cap (B_1^{\Omega_1} \times B_2^{\Omega_2}))$.
\end{itemize}

In particular, there are at most $|\Omega_1| \cdot |{\rm Out}(B_1)|$ conjugacy classes of such subgroups, and $[Y \!:\! M] \ge |B_1|^{|\Omega_1|}$.

\medskip
\item[{\rm 2)}] $t = 1$, $M \cap B^\Omega \not= (e)$ and $pr_w (M \cap B^\Omega) = B$ for all $w \in B$. There is a normal subgroup $U \vartriangleleft B$ which is a product $U = T^r$ where $T$ is non-abelian simple. Moreover,

\begin{itemize}
\item[{\rm (a)}] $M\cap U^\Omega$ is a product of subdiagonals of $T^{r \cdot \Omega}$ corresponding to an $X$-invariant block decomposition of $r \cdot \Omega$.

\item[{\rm (b)}] $M = \cN_Y (M \cap U^\Omega)$.

\item[{\rm (c)}] For any given $X$-invariant block decomposition of $r\cdot \Omega$ there are at most $|{\rm Out}(T)|^{r|\Omega |}$ conjugacy classes of such subgroups.

\item[{\rm (d)}] There are most $(2 | \Omega |)^{r -1} a^r_\Omega$ such block decompositions of $r\cdot \Omega$ where $a_\Omega$ is the number of $X$-invariant block decompositions of $\Omega$.

\item[{\rm (e)}] $[Y: M] \ge |T|^{\frac{r \cdot |\Omega |}{2}} = |U|^{\frac{|\Omega |}{2}}$.
\end{itemize}

\medskip
\item[{\rm 3)}]
$t=1$, $M\cap B^{\Omega}\neq (e)$ and $pr_\omega (M\cap B^{\Omega})$ is a proper subgroup of $B$. Up to conjugation $pr_{\omega}(M\cap B^{\Omega})=T$ for all $\omega\in \Omega$,
$$M = \cN_{Y}(T^{\Omega})
$$
and $[Y:M] \ge [B:T]^{|\Omega|}$.
\medskip
\item[{\rm 4)}] $t = 1$, $M \cap B^\Omega = e$.
\end{itemize}
\end{proposition}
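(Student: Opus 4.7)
The plan is to study $N := M \cap K$, where $K := \prod_{i=1}^t B_i^{\Omega_i}$, case-splitting on its structure. If $N = (e)$, then $M$ is a complement to $K$ in $Y$, isomorphic to $X$ via $\pi$; for $t \geq 2$, the subgroup $M \cdot B_t^{\Omega_t}$ lies strictly between $M$ and $Y$---strictly larger because $B_t^{\Omega_t} \not\subset M$, strictly smaller because it omits $B_j^{\Omega_j}$ for $j < t$---contradicting maximality of $M$. Hence $t = 1$, which is Case 4. When $N \neq (e)$, a preliminary lemma---exploiting perfectness of the $B_i$ and $X$-transitivity on each $\Omega_i$---would show that any nontrivial $Y$-normal subgroup of $K$ contains a nontrivial standard normal subgroup of $Y$. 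Cleanness of $M$ then forces $N$ to be non-normal in $Y$, so $\cN_Y(N)$ is a proper subgroup of $Y$ containing $M$, and maximality yields $M = \cN_Y(N)$, establishing (b) in Cases 1 and 2 and the normalizer form in Case 3.

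Next I would expose the subdirect structure of $N$. Set $C_i^\omega := p_\omega(N) \subseteq B_i$ for $\omega \in \Omega_i$; transitivity of $X$ on $\Omega_i$ together with $M$-invariance of $N$ makes the collection $\{C_i^\omega\}_\omega$ a single $B_i$-conjugacy class, and a standard cocycle-alignment argument lets me conjugate $M$ by an element of $K$ so that $C_i^\omega = C_i$ is independent of $\omega$. Then $N$ embeds as a subdirect product in $\prod_i C_i^{\Omega_i}$. Perfectness of $B_i$ guarantees that $C_i$ contains a characteristic normal subgroup of the form $U_i = T_i^{r_i}$ with $T_i$ non-abelian simple (the section underlying Case 2). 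Scott's lemma on subdirect products decomposes $N \cap \prod_i U_i^{\Omega_i}$ as a product of full diagonal subgroups of $T_i^{r_i|\Omega_i|}$ indexed by a partition $\cP$ of the coordinate set. The $M$-invariance of $N$ forces $\cP$ to be $X$-invariant, and cleanness excludes blocks supported entirely on a single $X$-orbit within one $\Omega_i$, since such a block would produce a nontrivial standard normal subgroup sitting inside $N$.

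These constraints now force the listed case split. When $t \geq 3$, any clean $X$-invariant $\cP$ compatible with maximality of $M = \cN_Y(N)$ admits a strict enlargement of $\cN_Y(N)$ (by shrinking a triple diagonal to a pair diagonal), a contradiction; hence $t \leq 2$. For $t = 2$, the only admissible $\cP$ pairs the orbits of $\Omega_1$ bijectively with those of $\Omega_2$ via diagonals, forcing $C_1 \cong C_2$ non-abelian simple and giving Case 1. For $t = 1$ with $C_1 = B$, the block structure of $\cP$ produces the description of Case 2. For $t = 1$ with $C_1 \subsetneq B$, maximality of $\cN_Y(N)$ forces $C_1 = T$ to be self-normalizing and $N = T^\Omega$, which is Case 3. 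The index and conjugacy-class-count bounds then fall out by direct enumeration: $X$-invariant block decompositions of $r\cdot\Omega$ for Case 2(d), outer-automorphism twists on each diagonal for Case 2(c), and orbit/normalizer data for the isomorphisms $B_1^{\Omega_1} \to B_2^{\Omega_2}$ in Case 1. The main obstacle I anticipate is executing the Scott-type analysis in the presence of the semidirect-product twist by $X$: tracking the $X$-equivariance alongside cleanness requires delicate combinatorial bookkeeping, particularly to exclude the $t \geq 3$ case and to verify that each admissible $\cP$ in fact corresponds to a maximal clean subgroup.
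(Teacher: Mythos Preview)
Your overall architecture---reduce to studying $N=M\cap K$, establish $M=\cN_Y(N)$, then invoke a Scott-type decomposition---is reasonable and does overlap with the paper's strategy. However, there is a genuine gap at the heart of your case split, and it is precisely the step you flag as the ``main obstacle.''

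The paper does \emph{not} rule out $t\ge 3$ (nor obtain the rigid graph structure when $t=2$) via any combinatorics of diagonals. Instead it proves a short but decisive lemma: since $M$ is maximal, $Y$ acts primitively on $Y/M$; since $M$ is clean, one shows the core $M^0$ meets $K$ trivially, so $K$ acts faithfully. Any two nontrivial normal subgroups $Y_1,Y_2\lhd Y$ inside $K$ with $Y_1\cap Y_2=(e)$ then commute, act transitively (normality plus primitivity), and hence regularly. Faithfulness of $K$ on $Y/M$ forces the images of any two such regular subgroups in $\mathrm{Sym}(Y/M)$ to coincide with the centralizer of the third, hence to coincide with each other, hence $Y_2=Y_3$---contradiction. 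This single argument yields $t\le 2$, and in the $t=2$ case the regularity of each $B_i^{\Omega_i}$ immediately gives $|B_i^{\Omega_i}|=|Y/M|$, forcing each $B_i$ to be simple and $M\cap(B_1^{\Omega_1}\times B_2^{\Omega_2})$ to be a graph. Your proposal does not recover these facts: you never show $C_i=B_i$ or that $B_i$ is simple when $t=2$, and the ``shrink a triple diagonal to a pair diagonal'' sketch for $t\ge 3$ is not a proof (refining a partition enlarges the subdirect subgroup, but you would still need to control its normalizer and check it remains clean and proper).

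There is also a concrete error in your cleanness step. You assert that cleanness ``excludes blocks supported entirely on a single $X$-orbit within one $\Omega_i$.'' It does not: a diagonal subgroup of $T^A$ with $A\subset\Omega_i$ contains no subgroup of the form $N_i^{\Omega_i}$ unless $|A|=1$. What cleanness actually rules out (and what the paper uses) is \emph{singleton} blocks, since a singleton block at $\omega$ places a full $T$-factor of $U(\omega)$ inside $M$, and then $\pi(M)=X$ together with $pr_\omega(M\cap B^\Omega)=B$ propagates this to a standard normal subgroup. This is exactly how the paper obtains the index bound $[Y:M]\ge |T|^{r|\Omega|/2}$ in Case~2(e). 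So your Scott-lemma analysis can be made to work for Case~2 once you correct this, but for the dichotomy $t\le 2$ and for the structure of Case~1 you really need the primitivity/regularity ingredient.
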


Now we state and prove several lemmas which together will imply the above proposition. We recall that we will work under the above standing assumptions on the objects $X, B_1,\dots,B_t$, $\Omega_1,\dots , \Omega_t$.

\begin{lemma}\label{lem2.4}
Assume $M < Y$ is proper, maximal, clean. Then $\cZ(B_i)=(e)$ for all $1 \le i \le t$.
\end{lemma}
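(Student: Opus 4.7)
The plan is to argue by contradiction: suppose $\cZ(B_i) \neq (e)$ for some $i$, set $Z_i := \cZ(B_i)$, and derive a violation of cleanness using the subgroup $Z_i^{\Omega_i} \le B_i^{\Omega_i} \le Y$.

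First I check that $Z_i^{\Omega_i}$ is a non-trivial standard normal subgroup of $Y$. It is central in $B_i^{\Omega_i}$, and it commutes with $B_j^{\Omega_j}$ for $j \neq i$ since those sit in distinct direct factors; moreover, $Z_i$ being characteristic in $B_i$, the product $Z_i^{\Omega_i}$ is preserved by the $X$-action permuting the $\Omega_i$-coordinates. So $Z_i^{\Omega_i}$ is normal in $Y$, and it is standard in the sense of Definition \ref{def2.1} (take $N_i = Z_i$ and $N_j = (e)$ for $j \neq i$). By cleanness of $M$ we must have $Z_i^{\Omega_i} \not\subseteq M$, and hence $M \cdot Z_i^{\Omega_i}$ is a subgroup of $Y$ strictly larger than $M$; by maximality, $M \cdot Z_i^{\Omega_i} = Y$.

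Next I would note that $M \cap B_i^{\Omega_i}$ already surjects onto $B_i^{\Omega_i}/Z_i^{\Omega_i}$. Indeed, given any $b \in B_i^{\Omega_i}$, write $b = mz$ with $m \in M$ and $z \in Z_i^{\Omega_i}$; since both $b$ and $z$ lie in $B_i^{\Omega_i}$, so does $m = b z^{-1}$, giving
\[
B_i^{\Omega_i} = (M \cap B_i^{\Omega_i}) \cdot Z_i^{\Omega_i}.
\]

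Finally, since $B_i$, and therefore $B_i^{\Omega_i}$, is perfect, and since $Z_i^{\Omega_i}$ lies in the center of $B_i^{\Omega_i}$, commutators in $B_i^{\Omega_i}$ are insensitive to multiplication of entries by elements of $Z_i^{\Omega_i}$, so
\[
B_i^{\Omega_i} = [B_i^{\Omega_i}, B_i^{\Omega_i}] = [M \cap B_i^{\Omega_i},\, M \cap B_i^{\Omega_i}] \subseteq M.
\]
But $B_i^{\Omega_i}$ is itself a non-trivial standard normal subgroup of $Y$ (take $N_i = B_i$ and $N_j = (e)$ otherwise), contradicting the cleanness of $M$. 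The only delicate point to watch is that $M$ is not assumed normal in $Y$, so one cannot just quotient by $M$; the trick is to reduce modulo the normal subgroup $Z_i^{\Omega_i}$, and exploit that centrality lets the commutator identity descend all the way back into $M$.
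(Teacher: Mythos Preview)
Your proof is correct and follows essentially the same route as the paper's: use cleanness plus maximality to get $M\cdot \cZ(B_i)^{\Omega_i}=Y$, intersect with $B_i^{\Omega_i}$ to obtain $B_i^{\Omega_i}=(M\cap B_i^{\Omega_i})\cdot \cZ(B_i)^{\Omega_i}$, and then use perfectness and centrality to conclude $B_i^{\Omega_i}\subseteq M$, contradicting cleanness. Your write-up is a bit more explicit about why $\cZ(B_i)^{\Omega_i}$ is a standard normal subgroup and why the commutator identity holds, but the argument is the same.
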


\begin{proof}
Assume that for some $j$, $\cZ(B_j) \not= (e)$. Since $M$ is clean and maximal, \linebreak $M \cdot \cZ(B_j)^{\Omega_j} = Y$ and since $B^{\Omega_j}_j \supset \cZ(B_j)^{\Omega_j}$ this implies ${B_j^{\Omega_j} = (M\cap B_j^{\Omega_j})\cZ(B_j)^{\Omega_j}}$. Since $B_j$ is perfect we obtain:
\begin{equation*}
B_j^{\Omega_j} = [B_j^{\Omega_j}, B_j^{\Omega_j}] = [M \cap B_j^{\Omega_j}, M \cap B_j^{\Omega_j}] \subset M
\end{equation*}

\n
which contradicts the assumption that $M$ is clean.
\end{proof}

\medskip
Given a subgroup $M < Y$ we let $M^0:= \bigcap_{y \in Y} y\,My^{-1}$ denote the kernel of the $Y$-action on $Y/M$. Also, given $N_i < B_i$, $w \in \Omega_i$, let $N_i(\o) \subset \prod^t_{j=1} B^{\Omega_j}_j$ denote the subgroup whose only non-identity component is at $\o \in \Omega_i$ and equals $N_i$.

\begin{remark}\label{rem2.5} \rm
If $M < Y$ is clean, $\pi (M) = X$ and $N_i  \vartriangleleft B_i$, then $M^0 \cap N_i(w) = (e)$ for all $\o \in \Omega_i$. Indeed, otherwise since $\pi (M) = X$, $M$ would contain the normal subgroup $\prod_{\o \in \Omega_i} M^0 \cap N_i(\o)$ which is standard.
\end{remark}

\begin{lemma}\label{lem2.6}
Assume $M < Y$ is proper, clean, maximal and $\pi(M) = X$. Then
\begin{equation*}
M^0 \cap \pr^t_{i=1} B_i^{\Omega_i} = (e).
\end{equation*}
\end{lemma}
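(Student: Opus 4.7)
The plan is to argue by contradiction. Set $K:=M^{0}\cap\prod_{i=1}^{t}B_{i}^{\Omega_{i}}$ and suppose $K$ contains a non-identity element; from it I will extract a nontrivial element of $K$ supported at a single position $(i_{0},\omega_{0})$ and then invoke Remark \ref{rem2.5} to derive a contradiction. The two ingredients I plan to use are that $K$ is normal in $Y$, being the intersection of the two $Y$-normal subgroups $M^{0}$ and $\prod_{i}B_{i}^{\Omega_{i}}=\ker\pi$, and that each $B_{i}$ is centerless by Lemma \ref{lem2.4}.

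Concretely, suppose $k\in K$ has a nontrivial entry $b_{i_{0},\omega_{0}}\in B_{i_{0}}$ at some position $(i_{0},\omega_{0})$. For each $c\in B_{i_{0}}$ let $c(\omega_{0})\in Y$ denote the element of $\prod_{i}B_{i}^{\Omega_{i}}$ whose only non-identity component is $c$ at position $(i_{0},\omega_{0})$. Since $K\vartriangleleft Y$, the commutator $[c(\omega_{0}),k]$ lies in $K$; a direct calculation in the product $\prod_{i}B_{i}^{\Omega_{i}}$ shows that at all positions different from $(i_{0},\omega_{0})$ its entries cancel, while at $(i_{0},\omega_{0})$ the entry equals $[c,b_{i_{0},\omega_{0}}]\in B_{i_{0}}$. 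In the notation introduced just before Remark \ref{rem2.5}, this produces
\begin{equation*}
[c,b_{i_{0},\omega_{0}}](\omega_{0})\;\in\;K\cap B_{i_{0}}(\omega_{0}).
\end{equation*}

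Applying Remark \ref{rem2.5} with $N_{i_{0}}=B_{i_{0}}$ yields $M^{0}\cap B_{i_{0}}(\omega_{0})=(e)$, hence $K\cap B_{i_{0}}(\omega_{0})=(e)$. Therefore $[c,b_{i_{0},\omega_{0}}]=e$ for every $c\in B_{i_{0}}$, forcing $b_{i_{0},\omega_{0}}\in\cZ(B_{i_{0}})$. But $\cZ(B_{i_{0}})=(e)$ by Lemma \ref{lem2.4}, so $b_{i_{0},\omega_{0}}=e$, contradicting the choice of $(i_{0},\omega_{0})$ and concluding the argument.

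I do not anticipate a genuine obstacle: the substantive work has already been packaged into Lemma \ref{lem2.4} (centerlessness of the factors) and Remark \ref{rem2.5} (which combines cleanness of $M$ with the surjectivity $\pi(M)=X$ to rule out single-coordinate pieces of $M^{0}$). The only points requiring care are the bookkeeping in the commutator computation and the observation that $K$ is normalized by all of $Y$, and in particular by the base $\prod_{i}B_{i}^{\Omega_{i}}$, which is what legitimizes conjugating $k$ by $c(\omega_{0})$.
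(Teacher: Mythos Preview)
Your proof is correct and follows essentially the same route as the paper: both argue by contradiction, take a commutator of an element of $K=M^{0}\cap\prod_i B_i^{\Omega_i}$ with a single-coordinate element of $B_{i_0}(\omega_0)$ to land in $M^{0}\cap B_{i_0}(\omega_0)$, invoke Remark~\ref{rem2.5} to force this commutator to be trivial, and then use Lemma~\ref{lem2.4} to obtain the contradiction. The only cosmetic difference is that the paper phrases the commutator step at the level of the whole projected subgroup $N_j=pr_\omega(K)$ rather than a single element, but the content is identical.
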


\begin{proof}
If $M^0 \cap \prod^t_{i=1} B_i^{\Omega_i} \not= (e)$, then let $j$ and $\o \in\Omega_j$ be such that
\begin{equation*}
N_j : = pr_\o \Big(M^0 \cap \pr^t_{i=1} B_i^{\Omega_i}\Big) \not= e.
\end{equation*}
Then $N_j \vartriangleleft B_j$ and
\begin{equation*}
[N_j(\o), B_j(\o)] = \Big[M^0 \cap \pr^t_{i=1} B_i^{\Omega_i}, \,B_j(\o)\Big] \subset M^0 \cap B_j(\o).
\end{equation*}

\n
Since by Lemma \ref{lem2.4}, $\cZ(B_j) = (e)$ we must have $[B_j(\o), N_j(\o)] \not= (e)$ and hence $M^0 \cap B_j(\o) \not= (e)$. But this contradicts the cleanness assumption by Remark \ref{rem2.5}.
\end{proof}

\begin{lemma}\label{lem2.7}
Let $M < Y$ be clean, proper, maximal with $\pi(M) = X$. If $Y_1,Y_2$ are non-trivial normal subgroups of $Y$ contained in $\prod^t_{i=1} B_i^{\Omega_i}$ such that
\begin{equation*}
Y_1 \cap Y_2 = (e) .
\end{equation*}
Then both $Y_1,Y_2$ act transitively and regularly on $Y/M$. In particular $ t=2$ \hbox{or $1$.}
\end{lemma}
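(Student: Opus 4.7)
The plan is to exploit Lemma \ref{lem2.6} --- which asserts $M^0 \cap \prod_{i=1}^t B_i^{\Omega_i} = (e)$, where $M^0 = \bigcap_{y \in Y} y M y^{-1}$ is the normal core of $M$ --- together with the maximality of $M$ and the hypothesis $Y_1 \cap Y_2 = (e)$, in order to extract successively the transitivity and then the regularity of the $Y_j$-action on $Y/M$. The hardest point, which surfaces in the regularity step, is that the obvious $M$-invariance of $M \cap Y_1$ does not by itself suffice; the disjointness hypothesis is needed to upgrade it to $Y$-invariance, at which stage Lemma \ref{lem2.6} does the work.

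For transitivity, if $Y_j \subset M$, then since $Y_j \vartriangleleft Y$ one would have $Y_j \subset M^0$, forcing $Y_j \subset M^0 \cap \prod_{i=1}^t B_i^{\Omega_i} = (e)$ by Lemma \ref{lem2.6}, contradicting $Y_j \neq (e)$. Hence $Y_j \not\subset M$, and the maximality of $M$ forces $M Y_j = Y$; in particular the $Y_j$-orbit of the identity coset in $Y/M$ is the whole of $Y/M$.

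For regularity one must show $M \cap Y_j = (e)$. Normality of $Y_1, Y_2$ combined with $Y_1 \cap Y_2 = (e)$ yields $[Y_1, Y_2] \subset Y_1 \cap Y_2 = (e)$, so $Y_1$ and $Y_2$ centralize one another. Consequently $Y_2$ normalizes $M \cap Y_1$; since $M$ trivially normalizes $M \cap Y_1$ and $Y = M Y_2$ by the previous step, the subgroup $M \cap Y_1$ is $Y$-normal. Being contained in $M$, it lies in $M^0$, and hence in $M^0 \cap \prod_{i=1}^t B_i^{\Omega_i} = (e)$ by Lemma \ref{lem2.6}. Symmetrically $M \cap Y_2 = (e)$, giving regularity of both actions.

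Finally, to obtain $t \leq 2$, suppose for contradiction that $t \geq 3$. Applying the established transitivity and regularity to the two admissible pairs $(Y_1, Y_2) = (B_1^{\Omega_1}, B_2^{\Omega_2})$ and $(Y_1, Y_2) = (B_1^{\Omega_1}, B_2^{\Omega_2} \times \cdots \times B_t^{\Omega_t})$ --- each consisting of non-trivial $Y$-normal subgroups with trivial intersection --- gives respectively $|B_1|^{|\Omega_1|} = [Y\!:\!M] = |B_2|^{|\Omega_2|}$ and $|B_1|^{|\Omega_1|} = \prod_{i=2}^t |B_i|^{|\Omega_i|}$. Combining the two equalities yields $\prod_{i=3}^t |B_i|^{|\Omega_i|} = 1$, contradicting the non-triviality of every $B_i$.
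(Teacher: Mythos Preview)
Your proof is correct and follows essentially the same line as the paper's: both use Lemma~\ref{lem2.6} for faithfulness/triviality, the commutation $[Y_1,Y_2]=(e)$ for regularity, and a cardinality comparison for $t\le 2$. The only cosmetic difference is that the paper phrases the regularity step as ``the centralizer of a transitive faithful action acts freely,'' whereas you argue directly that $M\cap Y_1$ is normalized by $MY_2=Y$ and hence lies in $M^0$; these are two ways of saying the same thing.
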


\begin{proof}
The $Y$-action on $Y/M$ is primitive and (Lemma \ref{lem2.6}) the subgroup $\prod^t_{i=1} B^{\Omega_i}_i$ is faithful.
The action of each of the normal subgroups $Y_i$ is transitive and faithful. Since they commute each of them must act freely and hence regularly. This implies that there cannot be three normal subgroups of $Y$ contained in $\prod^t_{i=1} B_i^{\Omega_i}$ such that every two of them intersect only at the identity. Thus $t\in\{1,2\}$.
\end{proof}

\begin{lemma}\label{lem2.8}
Assume that $M < Y$ is proper, clean, maximal with $\pi(M) = X$ and assume $t = 2$. Then $B_1,B_2$ are non-abelian simple, $M \cap (B_1^{\Omega_1} \times B_2^{\Omega_2})$ is the graph of an isomorphism
$B_1^{\Omega_1}\to B_2^{\Omega_2}$ and
\begin{equation*}
M = \cN_Y \big(M \cap (B_1^{\Omega_1} \times B_2^{\Omega_2})\big).
\end{equation*}
In addition, $[Y\!:\!M] \ge |B_1|^{|\Omega_1|} = |B_2|^{|\Omega_2|}$.
\end{lemma}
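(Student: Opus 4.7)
The strategy is to apply Lemma~\ref{lem2.7} repeatedly to well-chosen pairs of non-trivial normal subgroups of $Y$ contained in $B_1^{\Omega_1} \times B_2^{\Omega_2}$. First I would take $Y_1 = B_1^{\Omega_1}$ and $Y_2 = B_2^{\Omega_2}$: these are normal in $Y$, non-trivial since $B_i \not= (e)$, and they intersect only at the identity. Lemma~\ref{lem2.7} then produces regular actions of both on $Y/M$, so $[Y\!:\!M] = |B_1|^{|\Omega_1|} = |B_2|^{|\Omega_2|}$, which already delivers the index assertion in the statement.

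Next I would establish that each $B_i$ is non-abelian simple. For any non-trivial normal subgroup $N \vartriangleleft B_1$, the subgroup $N^{\Omega_1}$ is still normal in $Y$ (the $X$-action permutes $\Omega_1$-coordinates, and $N \vartriangleleft B_1$ handles the $B_1^{\Omega_1}$-conjugation), so the pair $(N^{\Omega_1}, B_2^{\Omega_2})$ satisfies the hypotheses of Lemma~\ref{lem2.7}. Regularity then forces $|N|^{|\Omega_1|} = [Y\!:\!M] = |B_1|^{|\Omega_1|}$, whence $N = B_1$. The same reasoning handles $B_2$. As each $B_i$ is perfect and non-trivial, it cannot be cyclic of prime order, so simplicity forces it to be non-abelian.

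To pin down the structure of $H := M \cap (B_1^{\Omega_1} \times B_2^{\Omega_2})$, note that regularity of $B_i^{\Omega_i}$ on $Y/M$ gives $M \cap B_i^{\Omega_i} = (e)$, so the two coordinate projections $p_i \colon H \to B_i^{\Omega_i}$ are injective. Since $\pi(M) = X$, we have $|H| = |M|/|X| = [Y\!:\!M] = |B_i^{\Omega_i}|$, so both projections are bijective and $H$ is the graph of the isomorphism $\phi := p_2 \circ p_1^{-1}$.

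Finally, since $K := B_1^{\Omega_1} \times B_2^{\Omega_2}$ is normal in $Y$, the identity $H = M \cap K$ gives $M \subseteq \cN_Y(H)$; by maximality of $M$ it then suffices to exhibit a single element of $Y$ not normalizing $H$. Conjugating $(c, \phi(c)) \in H$ by $b \in B_1^{\Omega_1}$ produces $(bcb^{-1}, \phi(c))$, which lies in $H$ only if $bcb^{-1} = c$. As $c$ ranges over $B_1^{\Omega_1}$, this forces $b \in \cZ(B_1^{\Omega_1})$, which is trivial by Lemma~\ref{lem2.4}; thus any non-identity $b \in B_1^{\Omega_1}$ does the job, and maximality yields $M = \cN_Y(H)$. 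The main subtle point, I expect, is the simplicity step: one must verify that substituting the potentially much smaller $N^{\Omega_1}$ for $B_1^{\Omega_1}$ still fits the hypotheses of Lemma~\ref{lem2.7} with a useful partner, which is precisely what the existence of the other factor $B_2^{\Omega_2}$ buys us.
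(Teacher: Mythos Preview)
Your argument is correct and tracks the paper's proof closely: both start from Lemma~\ref{lem2.7} to get regularity and the index, deduce simplicity by feeding $N^{\Omega_i}$ back into the same mechanism, and identify $H=M\cap(B_1^{\Omega_1}\times B_2^{\Omega_2})$ as a graph via $M\cap B_i^{\Omega_i}=(e)$. The only genuine divergence is in the normalizer step: the paper argues that if $M\subsetneq\cN_Y(H)$ then maximality forces $H\vartriangleleft Y$, whence $H\subset M^0$, contradicting Lemma~\ref{lem2.6}; you instead exhibit an explicit non-normalizing element of $B_1^{\Omega_1}$ using $\cZ(B_1)=(e)$ from Lemma~\ref{lem2.4}. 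Both routes are valid---yours is more hands-on, the paper's recycles the $M^0$ machinery already in place.
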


\begin{proof}
It follows from Lemma \ref{lem2.7} that both $B_1^{\Omega_1}$ and $B_2^{\Omega_2}$ act transitively and regularly on $Y/M$. Hence, if $(e) \not= N_i \vartriangleleft B_i$, then $N_i^{\Omega_i}$ being non-trivial, normal in $Y$, acts transitively on $Y/M$, hence $|N_i^{\Omega_i}| = |B_i^{\Omega_i}|$ and $N_i = B_i$ which shows that $B_i$ is simple.

\medskip
Next, $ B_1^{\Omega_1}\cdot M = Y$, hence $B_1^{\Omega_1} \times B_2^{\Omega_2} = B^{\Omega_1}_1 \cdot [M \cap (B_1^{\Omega_1} \times B_2^{\Omega_2})]$ and thus $pr_2 (M \cap (B_1^{\Omega_1} \times B_2^{\Omega_2})) = B_2^{\Omega_2}$. Similarly, we have $pr_1(M \cap (B_1^{\Omega_1} \times B_2^{\Omega_2})) = B_1^{\Omega_1}$. Since we also have $M \cap B_1^{\Omega_1} = M \cap B_2^{\Omega_2} = (e)$, we deduce that $M \cap (B_1^{\Omega_1} \times B_2^{\Omega_2})$ is the graph of an isomorphism $B_1^{\Omega_1} \r B_2^{\Omega_2}$. Finally, we have
\begin{equation*}
M \subset \cN_Y\big(M \cap (B_1^{\Omega_1} \times B_2^{\Omega_2})\big)
\end{equation*}
and if equality does not hold then $M \cap (B_1^{\Omega_1} \times B_2^{\Omega_2})$ is normal in $Y$ and hence equal to $M^0 \cap  (B_1^{\Omega_1} \times B_2^{\Omega_2})$, contradicting Lemma \ref{lem2.6}. The lower bound on $[Y\!:\!M]$ is clear.
\end{proof}

\medskip
Next, we need to estimate the number of $Y$-conjugacy classes of maximal subgroups as in Lemma \ref{lem2.8}.
\begin{lemma}\label{lem2.9}
The number of conjugacy classes of maximal subgroups as in Lemma \ref{lem2.8} is bounded by
\begin{equation*}
|{\rm Out}(B_1)| \cdot |\{v_2 \in \Omega_2: {\rm Stab}_X(v_2) = {\rm Stab}_X(v_1)\}|
\end{equation*}
where $v_1 \in \Omega_1$ is some chosen element.
\end{lemma}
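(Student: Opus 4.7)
Lemma \ref{lem2.8} tells me that such an $M$ equals $\cN_Y(N)$, where $N := M \cap (B_1^{\Omega_1}\times B_2^{\Omega_2})$ is the graph of an isomorphism $\varphi:B_1^{\Omega_1}\to B_2^{\Omega_2}$, so my plan is to classify such graphs $N$ up to $Y$-conjugacy and read off the count from the parametrization. Since $B_1,B_2$ are non-abelian simple, any such $\varphi$ must permute the simple factors: it is given by a bijection $\sigma:\Omega_1\to\Omega_2$ together with a family of isomorphisms $(\psi_v:B_1\to B_2)_{v\in\Omega_1}$ via $\varphi((b_v)_v)_w=\psi_{\sigma^{-1}(w)}(b_{\sigma^{-1}(w)})$. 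I will work with these parameters $(\sigma,(\psi_v))$ throughout.

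First I compute the $Y$-action on this parametrization. A direct calculation shows that conjugation of the graph by $(f_1,f_2)\in B_1^{\Omega_1}\times B_2^{\Omega_2}$ fixes $\sigma$ and replaces each $\psi_v$ by ${\rm Inn}(c_v)\circ\psi_v$ with $c_v := (f_2)_{\sigma(v)}\cdot\psi_v((f_1)_v)^{-1}$; since the $c_v\in B_2$ can be prescribed independently, componentwise inner conjugation lets me move each $\psi_v$ freely within its class in ${\rm Iso}(B_1,B_2)/{\rm Inn}(B_2)$. Conjugation by $x\in X$ instead sends $\sigma$ to the bijection $v\mapsto x\,\sigma(x^{-1}v)$ and the family $(\psi_v)$ to $(\psi_{x^{-1}v})$.

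Next I apply the hypothesis $\pi(M)=X$: for every $x\in X$ there must exist $f$ with $(x,f)\in\cN_Y(N)$, and matching this with the two formulas above forces simultaneously (i)~$\sigma$ to be $X$-equivariant, and (ii)~$[\psi_{x^{-1}v}]=[\psi_v]$ in ${\rm Iso}(B_1,B_2)/{\rm Inn}(B_2)$ for all $x\in X$, $v\in\Omega_1$. Since $X$ is transitive on $\Omega_1$ all classes $[\psi_v]$ then coincide with a single class $[\psi_0]$, and componentwise inner conjugation lets me normalize $\psi_v=\psi_0$ simultaneously for every $v$. Hence each $Y$-conjugacy class of $M$ corresponds to a pair $(\sigma,[\psi_0])$ with $\sigma$ an $X$-equivariant bijection $\Omega_1\to\Omega_2$ and $[\psi_0]\in{\rm Iso}(B_1,B_2)/{\rm Inn}(B_2)$, with no residual $Y$-action on this data.

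To conclude I count: an $X$-equivariant bijection $\sigma$ is determined by $\sigma(v_1)$, and conversely any $v_2\in\Omega_2$ with ${\rm Stab}_X(v_2)={\rm Stab}_X(v_1)$ gives rise (by $X$-transitivity on $\Omega_1$) to a well-defined $X$-equivariant bijection; moreover fixing any initial isomorphism identifies ${\rm Iso}(B_1,B_2)/{\rm Inn}(B_2)$ with ${\rm Out}(B_2)={\rm Out}(B_1)$. The product of these two counts yields the stated bound. The hard part will be the simultaneity argument in the third paragraph: one must verify that the single condition $(x,f)\in\cN_Y(N)$ genuinely couples the coordinate-permuting $X$-action with the componentwise inner twists tightly enough to force both $X$-equivariance of $\sigma$ and $X$-invariance of the classes $[\psi_v]$ at the same time, with no loss if one tries to trade one against the other.
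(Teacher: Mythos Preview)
Your proposal is correct and follows essentially the same approach as the paper: parametrize the graph by a bijection $\sigma:\Omega_1\to\Omega_2$ together with a family of isomorphisms $B_1\to B_2$, use $\pi(M)=X$ to force $X$-equivariance of $\sigma$ and constancy of the outer classes, then normalize by an inner conjugation and count. The only organizational difference is that the paper carries out the normalization step by writing the element $(x,e,h_x)\in M$ explicitly, deriving the cocycle identity $h_{x_1x_2}=h_{x_1}^{x_2}h_{x_2}$, and using $\cZ(B_2)=(e)$ to exhibit $h_x$ as a coboundary $f_2^x f_2^{-1}$ before conjugating by $(e,e,f_2)$; your argument replaces this with the cleaner observation that conjugation by $B_1^{\Omega_1}\times B_2^{\Omega_2}$ acts by arbitrary componentwise inner twists, so once all $[\psi_v]$ agree one can normalize directly. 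Your ``hard part'' worry is unfounded: since conjugation by $(f_1,f_2)$ leaves $\sigma$ fixed and only modifies the $\psi_v$ by inner automorphisms, the condition that some $(x,f)$ normalizes the graph forces $\sigma=x\sigma(x^{-1}\cdot)$ on the nose and $[\psi_{x^{-1}v}]=[\psi_v]$ simultaneously, with no trade-off possible.
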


\begin{proof}
According to Lemma \ref{lem2.8} the subgroup $M$ is determined by an isomorphism $F$: $B_1^{\Omega_1} \r B_2^{\Omega_2}$. Since $B_1,B_2$ are non-abelian simple such an isomorphism is given by the following data:

\begin{itemize}
\item[(1)] a bijection $\sigma$: $\Omega_1 \r \Omega_2$
\item[(2)] for every $w \in\Omega_1$ an isomorphism $\varphi_w$: $B_1 \r B_2$.
\end{itemize}

\medskip
Where
\begin{equation*}
F(f)(\big(\sigma(w)\big) = \varphi_w\big(f(w)\big), \quad \forall w \in \Omega_1, \quad \forall f \in B_1^{\Omega_1}.
\end{equation*}

\n
Then $M = \cN_Y({\rm graph}(F))\supset {\rm graph}(F)$ and the condition that $M$ surjects onto $X$ together with $M \cap B_2^{\Omega_2} = (e)$ gives that for every $x \in X$ there is a unique $h_x \in B_2^{\Omega_2}$ with $(x,e,h_x) \in M$. From this we deduce
\begin{equation}\label{2.1}
h_{x_1 x_2} = h_{x_1}^{x_2} \cdot h_{x_2} \quad \forall x_1,x_2 \in X.
\end{equation}
Next, using that $(x,e,h_x)$ normalizes the graph of $F$, we get for all $f \in B_1^{\Omega_1}$, $x \in X$,
\begin{equation*}
F(f^{x^{-1}}) = h_x^{x^{-1}} F(f)^{x^{-1}} (h_x^{x^{-1}})^{-1} .
\end{equation*}

\n
Evaluating this at $\sigma(\o)$ and for maps $f\in B_1^{\Omega_1}$ whose support is a singleton, we get
\begin{align}
&x \sigma(w) = \sigma(xw) \quad \forall w \in \Omega_1, \quad \forall x \in X.  \label{2.2}
\\[2ex]
&\varphi_{xw}(b) = h_x \big(\sigma(w)\big) \,\varphi_w (b) \;h_x\big(\sigma(w)\big)^{-1}\; \forall b \in B_1, \;\forall w \in \Omega_1, \; \forall x \in X. \label{2.3}
\end{align}

\n
Now fix a point $w_0 \in \Omega_1$ and observe that $h_x(\sigma(w_0)) \in \cZ(B_2) = (e)$ for all $x \in {\rm Stab}_X(w_0)$. Together with (\ref{2.1}) this implies that $f_2$: $\Omega_2 \r B_2$ given by $f_2(v) = h_{x_2}(\sigma(w_0))$ where $x_2 \in X$ is any element with $x_2 \,\sigma(w_0) = v$, is a well defined map.

\medskip
Rewriting the equation (\ref{2.1}) in terms of $f_2$, we get
\begin{equation*}
h_x = f_2^x \cdot f^{-1}_2.
\end{equation*}
Using this one computes that the group $(e,e,f_2) \, M(e,e,f_2)^{-1} = M'$ corresponds to an isomorphism $F': B_1^{\Omega_1} \r B_2^{\Omega_2}$ given by

\begin{itemize}
\item[1)] the same equivariant bijection $\sigma$: $\Omega_1 \r \Omega_2$;
\item[2)] a family of isomorphism $\varphi'_w$: $B_1 \r B_2$ which all coincide.
\end{itemize}

\medskip
From this, one readily deduces the upper bound stated in Lemma \ref{lem2.9}.
\end{proof}

\medskip
Now we turn to the situation where $t=1$, that is $Y = X \ltimes B^\Omega$, where we have set $B = B_1$, $\Omega = \Omega_1$. This splits into four different cases which we will analyze now. Our standing assumption on $M < Y$ is that it is a proper, clean, maximal subgroup which surjects onto $X$.

\begin{lemma}\label{lem2.10}
Assume that $M \cap B^\Omega \not= (e)$, $pr_w(M \cap B^\Omega) = B$ $\forall \o \in \Omega$, and that for every minimal normal subgroup $N \vartriangleleft B$, $M \cap N^\Omega = (e)$. Then $B$ is the product of two isomorphic non-abelian simple groups. Furthermore:
\begin{equation*}
M = \cN_Y(M \cap B^\Omega).
\end{equation*}
\end{lemma}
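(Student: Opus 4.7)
My plan is to organize the argument in three parts: quantitative consequences of the hypotheses, structural constraints on $B$, and the normalizer identity.

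First I would set $K := M \cap B^\Omega$, fix a minimal normal subgroup $N \vartriangleleft B$, and exploit primitivity of the $Y$-action on $Y/M$. By Lemma~\ref{lem2.6}, $M^0 \cap B^\Omega = (e)$, so $N^\Omega$ is not contained in $M^0$; hence $N^\Omega$ acts non-trivially on $Y/M$, and by primitivity transitively. Together with the hypothesis $M \cap N^\Omega = (e)$ this action is regular, yielding $[Y\!:\!M] = |N|^{|\Omega|}$ and $|K| = |B/N|^{|\Omega|}$; in particular every minimal normal of $B$ has the same order. Since $\pi(M)=X$ we have $Y=MB^\Omega$, so $B^\Omega$ itself acts transitively on $Y/M$ with stabilizer $K$, and the order count combined with $K\cap N^\Omega=(e)$ gives $B^\Omega = K\cdot N^\Omega$: thus $K$ is a complement to $N^\Omega$ in $B^\Omega$. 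Using $pr_w(K)=B$, a direct conjugation argument shows that $K\cap B(w)$ is a normal subgroup of $B$; since it meets every minimal normal of $B$ trivially (as $K\cap N(w)\subseteq K\cap N^\Omega = (e)$), and every non-trivial normal subgroup of $B$ contains a minimal normal, one concludes $K\cap B(w)=(e)$ for all $w$.

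Next I would show that $B$ has exactly two minimal normal subgroups, both isomorphic non-abelian simple. If $B$ had a unique minimal normal $N$, it would be characteristic in $B$, and I would then analyze the subdirect product $K\subseteq B^\Omega$ via Goursat: in the case $|\Omega|=2$ the relations $K\cap B(w)=(e)$ and $pr_w(K)=B$ force $K$ to be the graph of an isomorphism $\varphi\colon B\to B$, which must preserve the characteristic $N$ and therefore gives $K\cap N^{\{1,2\}}\neq(e)$, a contradiction; the general case reduces to this by restricting to pairs of coordinates. Conversely, three distinct minimal normals $N_1, N_2, N_3$ would furnish three $Y$-normal subgroups $N_i^\Omega$ of $B^\Omega$ pairwise intersecting trivially, which is impossible by the key step in the proof of Lemma~\ref{lem2.7}. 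So $B$ has exactly two minimal normals $N_1,N_2$, of equal order. I would then rule out the abelian case by combining perfectness and centerlessness of $B$ (Lemma~\ref{lem2.4}) with the complement structure $B^\Omega=K\cdot N_i^\Omega$: if both $N_i$ were elementary abelian $p$-groups, the abelian socle together with the graph structure of $K\cap (N_1 N_2)^\Omega$ would yield an abelian quotient of $B$, contradicting perfectness. In the remaining case each $N_i=T_i^{k_i}$ with $T_i$ non-abelian simple; the equal-order constraint, centerlessness of $B$, and $C_B(N_i)=N_{3-i}$ together force $k_i=1$, $T_1\cong T_2$, and $B=N_1\times N_2$.

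Finally, $M\subseteq\mathcal{N}_Y(K)$ is clear since $K\vartriangleleft M$. If this inclusion were strict, maximality of $M$ would give $\mathcal{N}_Y(K)=Y$, i.e.\ $K\vartriangleleft Y$; but then $K$, $N_1^\Omega$ and $N_2^\Omega$ would be three non-trivial $Y$-normal subgroups of $B^\Omega$ with pairwise trivial intersection, again contradicting Lemma~\ref{lem2.7}. The main obstacle in this plan will be the Goursat-style analysis in the unique-minimal-normal case for arbitrary $|\Omega|$, together with a clean ruling out of the abelian case; both require carefully tracking how the complement structure of $K$ in $B^\Omega$ interacts with the characteristic subgroups of $B$.
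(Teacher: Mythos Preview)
Your plan gets several things right: regularity of each $N^\Omega$ on $Y/M$, the fact that $K:=M\cap B^\Omega$ complements $N^\Omega$ in $B^\Omega$, the identity $K\cap B(w)=(e)$, and the bound of at most two minimal normals via (the proof of) Lemma~\ref{lem2.7}. But the crux of your proposal---ruling out a \emph{unique} minimal normal by a Goursat argument ``reducing to pairs of coordinates''---does not work as stated, and this is exactly where your route diverges from the paper's. If $|\Omega|\ge 3$ and you set $K_{12}:=pr_{w_1,w_2}(K)\subseteq B\times B$, then $K_{12}$ surjects onto each factor, but $K_{12}\cap(B\times e)=pr_{w_1}\bigl(\{k\in K:\,k(w_2)=e\}\bigr)\times\{e\}$, and an element of $K$ trivial at $w_2$ need not be supported at $w_1$ alone; so there is no reason for this intersection to be trivial, and you cannot conclude that $K_{12}$ is the graph of an automorphism of $B$. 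The later structural claims (ruling out abelian $N_i$, the identity $C_B(N_i)=N_{3-i}$, and $B=N_1\times N_2$ with $k_i=1$) are also only sketched and do not obviously close without something like a direct--factor statement for the $N_i$.

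The paper bypasses all of this with one short device: it shows that \emph{every} minimal normal $N\vartriangleleft B$ is a direct factor. For $w\in\Omega$ set $B_w=\{f\in B^\Omega:\ f(\nu)\in N\ \forall\,\nu\neq w\}$. From $Y=M\cdot N^\Omega$ one gets $B_w=(M\cap B_w)\cdot N^\Omega$, hence $B=pr_w(M\cap B_w)\cdot N$; since $B_w\vartriangleleft B^\Omega$ and $pr_w(M\cap B^\Omega)=B$, the subgroup $pr_w(M\cap B_w)$ is normal in $B$; and $pr_w(M\cap B_w)\cap N=pr_w(M\cap N^\Omega)=(e)$. Thus $N$ has a normal complement. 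Then $Z(B)=(e)$ (Lemma~\ref{lem2.4}) forces each minimal normal to be non-abelian simple, Lemma~\ref{lem2.7} gives at most two of them, and the hypothesis $M\cap N^\Omega=(e)$ rules out $B$ simple. For the normalizer identity the paper is also more direct than your three--normal--subgroups argument: if $M\cap B^\Omega\vartriangleleft Y$ then $M\cap B^\Omega\subseteq M^0$, so $M^0\cap B^\Omega\neq(e)$, contradicting Lemma~\ref{lem2.6}.
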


\begin{proof}
We claim that every non-trivial minimal normal subgroup $N \vartriangleleft B$ is a direct factor. For $\o \in \Omega$ let $B_\o : = \{f \in B^\Omega$: $f(\nu) \in N \; \forall \nu \not= w\}$. From $Y = M \cdot N^\Omega$ we deduce $B_\o = (M \cap B_\o) \,N^\Omega$ and hence $B= pr_\o(M \cap B_\o)\,N$. Observe that $M \cap B_\o \vartriangleleft M \cap B^\O$ and thus $pr_\o(M \cap B_\o) \vartriangleleft B$. Finally, $pr_\o(M \cap B_\o) \cap N = pr_\o(M \cap N^\Omega) = (e)$, which shows that $N$ is a direct factor of $B$. This implies using Lemma \ref{lem2.7} that $B$ is either non-abelian simple or the product of two such groups.
The assumption that for any minimal normal subgroup $N\vartriangleleft B$ $M\cap N^{\Omega}=(e)$ implies that $B$ is not simple.

\medskip
If the last assertion of the lemma were not to hold, then $M \cap B^\O \vartriangleleft Y$ and hence ${M^0 \cap B^\O = M \cap B^\O \not= (e)}$, contradicting Lemma \ref{lem2.6}.
\end{proof}

\begin{lemma}\label{lem2.11}
Assume that $M \cap B^\Omega \not= (e)$, $pr_\o(M \cap B^\Omega) = B \; \forall \o \in \Omega$ and there is $N \vartriangleleft B$ minimal normal with $M \cap N^\Omega \not= (e)$. Then $N$ is a product of (isomorphic) non-abelian simple groups.

\begin{itemize}
\item[{\rm (a)}] $pr_w(M \cap N^\Omega) = N$ for all $w \in \Omega$.

\item[{\rm (b)}] $M = \cN_Y(M \cap N^\Omega)$.
\end{itemize}
\end{lemma}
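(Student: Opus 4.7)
The plan is to split the statement into three independent pieces---the non-abelianness of the simple factors of $N$, the subdirectness property (a), and the normalizer property (b)---and to attack each using the same two tools: the maximality/cleanness of $M$ and Lemma \ref{lem2.6}.

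\textbf{Non-abelianness of $N$.} First, since $N \vartriangleleft B$ the subgroup $N^\Omega$ is normal in $Y$ and is a non-trivial standard normal subgroup; by cleanness it is not contained in $M$, so maximality of $M$ forces $M \cdot N^\Omega = Y$. Thus $N^\Omega$ acts transitively on $Y/M$ by left translation, and Lemma \ref{lem2.6} tells us that the kernel $M^0 \cap N^\Omega$ of this action is trivial, so the action is also faithful. Since $N$ is a minimal normal subgroup of the finite group $B$, it is characteristically simple, hence $N \cong T^r$ for some simple group $T$. If $T$ were abelian, $N^\Omega$ would be abelian and its transitive faithful action on $Y/M$ would be regular, forcing $M \cap N^\Omega = (e)$ and contradicting the hypothesis; hence $T$ is non-abelian simple.

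\textbf{Part (a).} Put $N'_\omega := pr_\omega(M \cap N^\Omega) \subseteq N$. Conjugation by $(e,f) \in M \cap B^\Omega$ preserves $M \cap N^\Omega$ and, read at coordinate $\omega$, is just conjugation by $f(\omega) \in B$; since $pr_\omega(M \cap B^\Omega) = B$ by hypothesis, this shows $N'_\omega \vartriangleleft B$. A parallel but slightly longer computation, conjugating by a lift $(x,f_x) \in M$ of $x \in X$, shows that $N'_\omega$ and $N'_{x^{-1}\omega}$ are $B$-conjugate normal subgroups of $B$, hence equal. Using $\pi(M) = X$ and transitivity of $X$ on $\Omega$, the subgroup $N' := N'_\omega$ is therefore independent of $\omega$; it is a non-trivial normal subgroup of $B$ contained in $N$, and minimality of $N$ gives $N' = N$.

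\textbf{Part (b).} The inclusion $M \subseteq \cN_Y(M \cap N^\Omega)$ is automatic from $M \cap N^\Omega \vartriangleleft M$. If it were strict, maximality of $M$ would promote it to $\cN_Y(M \cap N^\Omega) = Y$, i.e.\ $M \cap N^\Omega \vartriangleleft Y$; but then $M \cap N^\Omega$ lies in every $Y$-conjugate of $M$, hence in $M^0$, hence in $M^0 \cap B^\Omega = (e)$ by Lemma \ref{lem2.6}, contradicting $M \cap N^\Omega \not= (e)$. The only mildly technical point in the whole argument is the semidirect-product bookkeeping required to verify $X$-equivariance of the subgroups $N'_\omega$ in (a); the remaining two parts are quick consequences of cleanness via Lemma \ref{lem2.6}.
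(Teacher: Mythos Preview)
Your proof is correct and follows essentially the same route as the paper's: transitivity and faithfulness of $N^\Omega$ on $Y/M$ (via cleanness, maximality, and Lemma~\ref{lem2.6}) rule out abelian $N$; normality of $pr_\omega(M\cap N^\Omega)$ in $B=pr_\omega(M\cap B^\Omega)$ together with minimality of $N$ gives (a); and the normalizer argument for (b) is identical. You simply spell out more of the bookkeeping (the $X$-equivariance of the $N'_\omega$ and the regularity contradiction for abelian $N$) that the paper leaves implicit.
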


\begin{proof}
Since $N^\O$ acts transitively, faithfully and not regularly on $Y/M$, it (and hence also $N$) cannot be abelian. Since $N$ is a minimal normal subgroup of $B$ it is therefore a product of isomorphic non-abelian simple groups.

\medskip
Next, we observe that $pr_\o(M \cap N^\O)$ is normal in $B = pr_\o(M \cap B^\O)$, and being non-trivial, we deduce $pr_w(M \cap N^\O) = N$ for all $w \in \O$. Again, if $M \cap N^\Omega \vartriangleleft Y$, then $M^0 \cap N^\Omega = M \cap N^\Omega \not= (e)$ contradicting Lemma \ref{lem2.6}.
\end{proof}

It follows from Lemma \ref{lem2.10} and Lemma \ref{lem2.11} that when $t = 1$, $M \cap B^\Omega \not= (e)$ and $pr_w (M \cap B^\Omega) = B$ for all $w \in \Omega$, there is a normal subgroup $U \vartriangleleft B$ which is a product $U = T^r$ where $T$ is non-abelian simple and $pr_w(M \cap U^\Omega) = U$ for all $w \in \Omega$. Since $T$ is non-abelian simple, this implies already the assertion 2) (a) in Proposition \ref{prop2.3}. Concerning 2) (b), we observe that, as usual, if $M \cap U^\Omega \vartriangleleft Y$, then $M^0 \cap U^\Omega = M \cap U^\Omega \not= (e)$ which contradicts Lemma \ref{lem2.6}.

\medskip
Next, we turn to assertions (c), (d) and (e) which will follow from the following discussion. The subgroup $M \cap U^\Omega = M \cap T^{r \cdot \Omega}$ projects onto $T$ for each $w \in r \cdot \Omega$, where we recall that $r \cdot \Omega$ denotes the disjoint union of $r$ copies of $\Omega$ with corresponding $X$-action. Since $\pi(M) = X$, the subgroup  $H = M \cap T^{r\cdot \Omega}$ is a product of subdiagonals of $T^{r\cdot \Omega}$ corresponding to a $X$-invariant block decomposition of $r \cdot \Omega$. Given a partition $\cP$ of $r \cdot \Omega$ such a subgroup $H$ is obtained in the following way: for each $x \in r \cdot \Omega$ there is an automorphism $\varphi_x$ of $T$ and $H$ is the direct product over $A \in \cP$ of the diagonal subgroup of $T^A$ given by
\begin{equation*}
\big\{ \big(\varphi_x(t)\big): t \in T, \; x \in A\big\}.
\end{equation*}

\bigskip\n
The index of $H$ in $T^{ r \cdot \Omega}$ is then $|T|^{e}$ where $e = \sum_{A \in \cP} (|A| -1)$.
Observe that $|A|\ge 2$ for every $A\in \cP$. Indeed otherwise for some $\omega \in \Omega$, $M$ would contain a $T$-factor of $U(\omega)$; but $pr_\omega(M\cap B^{\Omega})=B$ which implies
in the case of Lemma~\ref{lem2.11} where $U=N$
is a minimal normal subgroup of $B$, that $M \supset N(\omega)$ and hence since the projection of $M$ is $X$ it follows that $M\supset N^{\Omega}$ contradicting the assumption that $M$ is clean.
In the case of Lemma~\ref{lem2.10} where $U=B=T^2$, it follows that $M$ would contain a $T$ factor of $B(\omega)$. Hence since the projection of $M$ is $X$ it follows that $M\supset T^\Omega$ which since $T\vartriangleleft B$ is a standard normal subgroup, contradicting the assumption that $M$ is clean.
This implies $[Y: M] \ge [T^{r \cdot \Omega}:H] \ge |T|^{\frac{r |\Omega|}{2}}$ and establishes assertion (e) of Proposition \ref{prop2.3}. Concerning assertion (c), just observe that we can conjugate $H$ by inner automorphisms of $T^{r \cdot \Omega}$ so that for a given partition we have at most $|{\rm Out} (T)|^{r | \Omega |}$ conjugacy classes of such subgroups.

\medskip
What remains is to estimate the number of $X$-invariant partitions of $r \cdot \Omega$. To this end, let $\Omega_1,\Omega_2$ be transitive $X$-sets and set $\wt{\Omega} = \Omega_1 \cup \Omega_2$.

\bigskip
If $\cP$ is an $X$-invariant partition with $|\cP| = k$ such that some $A \in \cP$ intersects $\Omega_1$ and $\Omega_2$, then $\cP$ induces partitions into $k$ pieces of $\Omega_1$ and $\Omega_2$. Denoting by $a(k,\Omega_i)$ the number of $X$-invariant partitions with $k$ pieces of $\Omega_i$ and by
$a_{\Omega_i}$ the total number of $X$-invariant partitions of $\Omega_i$, we have that the number of $X$-invariant partitions of $\Omega_1 \cup \Omega_2$ is estimated by:
\begin{equation*}
\begin{array}{l}
a_{\Omega_1} \cdot a_{\Omega_2} + \dsl^{\min(|\Omega_1|,|\Omega_2|)}_{k=1} k\; a(k,\Omega_1) \, a(k,\Omega_2)\  \le
\\
\\
(1+m)a_{\Omega_1} a_{\Omega_2} \le 2m \,a_{\Omega_1} a_{\Omega_2}\,,
\end{array}
\end{equation*}

\medskip\n
where $m = \min (|\Omega_1|, |\Omega_2|)$ and the estimate in the first line uses the observation that to obtain a partition into k pieces of $\Omega_1\cup\Omega_2 $ whose pieces meet both $\Omega_i$'s one needs to partition each $\Omega_i$ into $k$ pieces and pair them. The pairing of these pieces is determined (by the transitivity of the action on each $\Omega_i$) by choosing for one piece of $\Omega_1$ a piece of $\Omega_2$.
Applying this inequality with $\Omega_1 = (r-1)\cdot \Omega$ and $\Omega_2=\Omega$ we get $a_{r \cdot \Omega} \le 2 | \Omega| \;a_{(r- 1) \Omega} \cdot a_\Omega$ and by recurrence,
\begin{equation*}
a_{r \cdot \Omega}  \le \big(2|\Omega|\big)^{(r-1)} (a_\Omega)^r.
\end{equation*}

\begin{lemma}\label{lem2.12}
Assume that $M \cap B^\O \not= (e)$ and $pr_\o(M \cap B^\O) \not= B$ for all $\o \in \O$. Then up to conjugating $M$, there is a subgroup $e \not= T \lneq B$ such that $M = \cN_Y(T^\O)$. We have $[Y\!:\! M] \ge [B\!:\!T]^{|\O|}$.
\end{lemma}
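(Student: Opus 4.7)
The strategy is to realize $M$ as the normalizer $\cN_Y(K)$ of $K := M \cap B^\Omega$, then conjugate $M$ so that all projections $pr_\omega(K)$ coincide with a single proper subgroup $T \lneq B$, and finally identify $M$ with $\cN_Y(T^\Omega)$ by maximality. For the first step, $K \vartriangleleft M$ gives $M \subset \cN_Y(K)$; if we had $\cN_Y(K) = Y$, then $K \vartriangleleft Y$ would put $K$ inside $M^0 \cap B^\Omega = (e)$ by Lemma \ref{lem2.6}, contradicting $K \neq (e)$, so maximality forces $M = \cN_Y(K)$.

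For the alignment of projections, I use $\pi(M) = X$ and the transitivity of $X$ on $\Omega$: for any $\omega_1, \omega_2 \in \Omega$ choose a lift $(x, g) \in M$ with $x \omega_1 = \omega_2$. A direct calculation using the product structure of $Y$ shows that the $\omega$-th coordinate of $(x, g) \cdot f \cdot (x, g)^{-1}$ equals $g(x^{-1}\omega) f(x^{-1}\omega) g(x^{-1}\omega)^{-1}$, and applying this to all of $K$ gives $pr_{\omega_2}(K) = g(\omega_1) \cdot pr_{\omega_1}(K) \cdot g(\omega_1)^{-1}$. So the projections are pairwise conjugate in $B$ to a common proper subgroup $T \lneq B$, and conjugating $M$ by an appropriate element of $B^\Omega$ reduces us to the case $pr_\omega(K) = T$ for every $\omega$ (so in particular $K \subset T^\Omega$). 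The same coordinate computation applied to any $(x, g) \in M$, together with the fact that $f(x^{-1}\omega)$ sweeps out all of $T$ as $f$ varies in $K$, forces $g(x^{-1}\omega) \cdot T \cdot g(x^{-1}\omega)^{-1} \subset T$, whence $g(x^{-1}\omega) \in \cN_B(T)$ for each $\omega$. This gives $M \subset \cN_Y(T^\Omega) = X \ltimes \cN_B(T)^\Omega$.

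The main obstacle is ruling out $T \vartriangleleft B$, which would make $\cN_Y(T^\Omega) = Y$ and break the maximality argument. If $T$ were normal in $B$, then $T^\Omega$ would be a nontrivial standard normal subgroup of $Y$, and maximality of $M$ would force $M \cdot T^\Omega \in \{M, Y\}$. The first alternative puts the standard normal subgroup $T^\Omega$ inside $K \subset M$, violating cleanness. In the second alternative, passing to $Y/T^\Omega = X \ltimes (B/T)^\Omega$ gives $\overline{M} = Y/T^\Omega$, while $\overline{M} \cap (B/T)^\Omega = K T^\Omega / T^\Omega = (e)$ (because $K \subset T^\Omega$), forcing $B = T$, a contradiction. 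Therefore $\cN_Y(T^\Omega) \subsetneq Y$, and by maximality $M = \cN_Y(T^\Omega)$. Combining $K = \cN_B(T)^\Omega$ with $K \subset T^\Omega$ then forces $\cN_B(T) = T$, and $[Y\!:\!M] = [B\!:\!\cN_B(T)]^{|\Omega|} = [B\!:\!T]^{|\Omega|}$.
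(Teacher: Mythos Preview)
Your proof is correct and follows essentially the same route as the paper's: show that the coordinate projections $T_\omega = pr_\omega(M\cap B^\Omega)$ are all $B$-conjugate (via $\pi(M)=X$), conjugate to align them to a single $T$, deduce $M \subset \cN_Y(T^\Omega)$, and rule out $T \vartriangleleft B$ by the same ``$M\cdot T^\Omega = Y \Rightarrow B^\Omega = (M\cap B^\Omega)\,T^\Omega = T^\Omega$'' argument (you phrase it via the quotient $Y/T^\Omega$, the paper does it directly, but the content is identical). Two minor remarks: your opening step $M = \cN_Y(K)$ is correct but never used afterwards, since you re-derive $M \subset \cN_Y(T^\Omega)$ from the explicit coordinate computation; the paper instead obtains this inclusion in one stroke by observing that $\prod_\omega T_\omega$ is the unique smallest product subgroup of $B^\Omega$ containing $K$, hence is normalized by anything normalizing $K$. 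Also, your final paragraph extracts a bit more than the paper states, namely $\cN_B(T)=T$ and hence equality $[Y\!:\!M] = [B\!:\!T]^{|\Omega|}$ rather than just the inequality.
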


\begin{proof}
Let $T_\o: = pr_\o(M \cap B^\O)$, $\o \in \O$. Then, since $M$ surjects onto $X$, all the subgroups $T_\o$ are conjugated within $B$. In addition, $M$ normalizes $\prod_\o T_\o$: indeed, it is the unique smallest subgroup of $B^\O$ containing $M \cap B^\O$ and having direct product structure. Thus, modulo conjugating $M$, we may assume $T_\o = T \; \forall \o \in \O$, hence $M \subset \cN_T(T^\O)$. If they were not equal, we would have that $T \vartriangleleft B$, hence from $M \cdot T^\O = Y$ we would deduce $B^\O = (M \cap B^\O) \,T^\O$, but since $M \cap B^\O \subset T^\O$ this would imply $T = B$ which is a contradiction.
\end{proof}

\medskip
The last case left is when $M \cap B^\O = (e)$, that is, $M$ is a section of $X$ in $Y$. Concerning this, we cannot say anything in this degree of generality, and we will have to estimate by recurrence the number of sections of $L_n$ in groups of the form $L_n \ltimes  B^\cO$, where $\cO \subset S_n$ is an $L_n$-orbit.

\section{Estimating $\zeta_{L_{n+1}/L_n}(k)$}

In order to prove Theorem \ref{theo0.1}, we will use Corollary \ref{cor1.2} and show that for some $k \ge 1$,
\begin{equation*}
\dsl^\infty_{n=1} \zeta_{L_{n+1}/ L_n}(k)  < + \infty.
\end{equation*}
Thus we have to estimate
\begin{equation*}
\zeta_{L_{n+1}/ L_n}(k) = \dsl \; \dis\frac{1}{[L_{n+1}\!:\! M]^k} \, ,
\end{equation*}

\n
where the sum is over the conjugacy classes of proper, maximal subgroups $M < L_{n+1}$ surjecting onto $L_n$. To this end, let $M_s$ denote the maximal standard normal subgroup of $L_{n+1}$ contained in $M$ (see Definition \ref{def2.1}).
Let $I_n$ be the set of $L_n$ orbits in $S_n$.
Then
\begin{equation*}
L_{n+1} / M_s = L_n  \ltimes \pr_{v \in I_n} (B_v)^v \, ,
\end{equation*}

\n
where $B_v$ are quotients of $L$ and $M / M_s$ is a clean proper maximal subgroup which surjects onto $L_n$. Let us observe first that we cannot have $B_v = (e)$ for all $v \in I_n$ since otherwise $M_s = L^{S_n}$ which together with the hypothesis that $M$ surjects onto $L_n$ would imply that $M = L_{n+1}$ contradicting properness. Proposition \ref{prop2.3} implies then that there is either exactly one pair $v_1 \not= v_2$ of orbits for which $B_{v_1} \not=  (e)$ and $B_{v_2} \not= (e)$, or there is exactly one orbit $v \in I_n$ for which $B_v \not= (e)$. We are going to estimate the contribution to $\zeta_{L_{n+1}/L_n}$ according to the four different cases of Proposition \ref{prop2.3}.

\medskip
To this end we make some preliminary observations. Let
\begin{equation*}
D = D_1 \cup \dots \cup D_\ell
\end{equation*}
be a partition into $L$-orbits. Then by hypothesis b) of Theorem~\ref{theo0.1} we have
\begin{equation}\label{3.1}
2 \le |D_i|, \quad \forall 1 \le i \le \ell\,.
\end{equation}
The set of $L_n$-orbits in $S_n$ is given by
\begin{equation*}
I_n = \{D_{i_1} \times \dots \times D_{i_n} :(i_1, \dots, i_n) \in [1,\ell]^n\}.
\end{equation*}
Then there are $\ell^n$ orbits of $L_n$ in $S_n$ and for each such orbit $\cO$ we have
\begin{equation}\label{3.2}
2^n \le |\cO| \le |D|^n .
\end{equation}

\medskip\n
{\bf Case 1:} Every pair of orbits $\cO_1 \not= \cO_2$ in $S_n$ leads possibly to a contribution to $\zeta_{L_{n+1}/L_n}(k)$ which is bounded by
\begin{equation*}
\dis\frac{C^2_1 \cdot C_2 \,|D|^n}{(2^{2^n})^k} \,,
\end{equation*}

\n
where $C_1$ is the number of simple quotients of $L$ and $C_2$ an upper bound on the cardinality of their outer automorphism groups; this takes into account the estimate (\ref{3.2}).

\medskip
Summing over all distinct pairs of orbits, the total contribution of Case 1 is bounded by
\begin{equation*}
C^2_1 \, C_2 {\ell^n \choose 2} \; \dis\frac{|D|^n}{(2^{2^n})^k} \le C^2_1 \, C_2 \; \dis\frac{(\ell^2 \,|D|)^n}{(2^{2^n})^k} \;.
\end{equation*}

\medskip\n
{\bf Case 2:}
According to Proposition~\ref{2.3} 2 d) we need to estimate the number of $L_n$-block decompositions of an orbit $\cO \in I_n$. Observe that a $L_n$-block decomposition of $D_{i_1} \times \dots \times D_{i_n}$ is obtained by taking the products of $L$-block decompositions in each $D_{i_j}$. Therefore, if $C_3$ is an upper bound on the number of $L$-block decompositions of $D_i$, $1 \le i \le \ell$, then the number of $L_n$-block decompositions in any orbit $\cO \in I_n$ is bounded by $C_3^n$.

\medskip
Now fix $\cO \in I_n$ such that $B_\cO \not= (e)$. Then every normal subgroup $N \vartriangleleft B$ which is of the form $T^r$ for some non-abelian simple $T$ contributes at most
\begin{equation*}
|{\rm Out}(T)|^{r|\cO |} (2 \,|\cO|^2)^{r-1} \,C_3^{r \cdot \cO} \cdot \big(|N|^{-\frac{|\cO|}{2}}\big)^k
\end{equation*}

\medskip\n
to $\zeta_{L_{n+1} /L_n}(k)$ and this can be rewritten as:
\begin{equation*}
(2 \,|\cO|)^{r-1} \,C_3^{r \cdot n}  \Big(\dis\frac{|{\rm Out} (T)|^r}{|N|^{k/2}}\Big)^{|\cO|} .
\end{equation*}

\medskip\n
Now choose $k$ large enough so that $\frac{|{\rm Out}(T)|^r}{|N|^{k/2}} < 1$, then one sees easily that the sum over all $\cO \in I_n$ and $n \ge 1$ converges, taking into account that $|\cO| \ge 2^n$.

\medskip\n
{\bf Case 3:} The total contribution of this case is easily seen to be bounded by
\begin{equation*}
\dis\frac{C_6\cdot \ell^n}{(2^{2^n})^k}\;,
\end{equation*}
where $C_6$ bounds the number of subgroups of quotients of $L$.

\medskip
Thus the sum over $n \ge 1$ of the contributions to $\zeta_{L_{n+1}/L_n}(k)$ in the Cases 1, 2 and 3 converge already for $k$ large enough depending on $L$.

\medskip
{\bf Case 4:}
 We will treat this case by estimating the number of sections of $L_n$ in $L_n \ltimes B^\cO$. To this end let $\cO = D_{i_1} \times \dots \times D_{i_n}$, we define $\cO_j : = D_{i_1} \times \dots \times D_{i_j}$, $1 \le j \le n$ so that $\cO_n = \cO$. Let $a(j)$ be the number of sections of $L_j$ in $L_j \ltimes B^{\cO_j}$.

\medskip
Let $K$ be an upper bound on the number of non-abelian subgroups of $L  \ltimes B^{D_j}$ and $C_7$ an upper bound on $|{\rm Hom}(L,L \ltimes  B^{D_j})|$ for any quotient $B$ of $L$ and any $1 \le j \le \ell$.

\begin{lemma}\label{lem3.1}
$a(n) \le a(n-1)^{|D_{i_n}|} {|S_{n -1}| \choose K} \;K! \, C_7^K$.
\end{lemma}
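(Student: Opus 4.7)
The strategy is to use the wreath-product decomposition $L_n = L_{n-1} \ltimes L^{S_{n-1}}$ and to count sections $\sigma : L_n \to L_n \ltimes B^{\cO_n}$ by bounding the restrictions $\sigma|_{L_{n-1}}$ and $\sigma|_{L^{S_{n-1}}}$ separately.

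For the restriction to $L_{n-1}$, the key observation is that $\cO_n = \cO_{n-1} \times D_{i_n}$ and the subgroup $L_{n-1} \subset L_n$ (embedded as $L_{n-1} \times \{e\}$) acts only on the first coordinate. Consequently each slice $\cO_{n-1} \times \{d\}$, for $d \in D_{i_n}$, is $L_{n-1}$-invariant, and $B^{\cO_n}$ decomposes as an $L_{n-1}$-module into a direct product of $|D_{i_n}|$ copies of $B^{\cO_{n-1}}$. A section $L_{n-1} \to L_{n-1} \ltimes B^{\cO_n}$ therefore corresponds bijectively to a $|D_{i_n}|$-tuple of sections $L_{n-1} \to L_{n-1} \ltimes B^{\cO_{n-1}}$. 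By the definition of $a(n-1)$, this yields at most $a(n-1)^{|D_{i_n}|}$ possibilities for $\sigma|_{L_{n-1}}$.

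Now fix $\sigma|_{L_{n-1}}$ and consider extensions to $L^{S_{n-1}}$. Writing $L^{S_{n-1}} = \prod_{v \in S_{n-1}} L^{(v)}$ with $L^{(v)}$ the $v$-th factor, an extension is determined by the homomorphisms $\sigma|_{L^{(v)}} : L \to L_n \ltimes B^{\cO_n}$ for $v \in S_{n-1}$. Each $L^{(v)}$ acts on $B^{\cO_n}$ only through the fiber $\{v\} \times D_{i_n}$ when $v \in \cO_{n-1}$ (and trivially when $v \notin \cO_{n-1}$), so $\sigma(L^{(v)})$ lies in a direct product $(L \ltimes B^{D_{i_n}}) \times B^{\text{rest}}$, with the essential data at $v$ recorded by a homomorphism $L \to L \ltimes B^{D_{i_n}}$, of which there are at most $C_7$. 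The commutativity $[\sigma(L^{(v)}), \sigma(L^{(v')})] = e$ for $v \neq v'$, inherited from $L^{S_{n-1}}$, combined with the perfectness of $L$, forces the non-trivial part of $\sigma|_{L^{S_{n-1}}}$ to be supported on at most $K$ coordinates of $S_{n-1}$: indeed, the image of $\sigma(L^{S_{n-1}})$ in any fiber $L \ltimes B^{D_{i_n}}$ is a non-abelian subgroup (it contains a perfect image of $L$), and such subgroups number at most $K$.

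Counting: there are ${|S_{n-1}| \choose K}\,K!$ ordered $K$-tuples of distinct coordinates in $S_{n-1}$, and at each active coordinate a homomorphism in ${\rm Hom}(L, L \ltimes B^{D_{i_n}})$ bounded by $C_7$, giving ${|S_{n-1}| \choose K}\,K!\,C_7^K$ extensions. Combining with the first step produces the bound of the lemma. The main obstacle is the support bound: showing that the non-trivial part of a section of $L^{S_{n-1}}$ in $L^{S_{n-1}} \ltimes B^{\cO_n}$ is concentrated on at most $K$ coordinates. This rests on a careful analysis of how the perfect group $L^{S_{n-1}}$ maps into the fixed finite group $L \ltimes B^{D_{i_n}}$, exploiting the structure of normal subgroups of $L^X$ for perfect $L$ and the bounded cardinality of the target to ensure that only few coordinates contribute.
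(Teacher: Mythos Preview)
Your treatment of $\sigma|_{L_{n-1}}$ matches the paper exactly and is fine. The gap is in the second step, where you count extensions to $L^{S_{n-1}}$.

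You decompose the \emph{source} $L^{S_{n-1}} = \prod_{v} L^{(v)}$ and try to bound each $\sigma|_{L^{(v)}}$ separately. You correctly note that $\sigma(L^{(v)}) \subset (L \ltimes B^{D_{i_n}}) \times B^{\text{rest}}$, but you then discard the $B^{\text{rest}}$ component as non-essential. Nothing in your argument justifies this: the projection of $\sigma(L^{(v)})$ to $B^{\cO_n \setminus (\{v\}\times D_{i_n})}$ can be nontrivial, and you never explain why it is determined by the data you keep. Your support argument is also confused: you invoke the bound $K$ on non-abelian subgroups of a single copy of $L \ltimes B^{D_{i_n}}$, but that bound operates fiberwise in the \emph{target} (indexed by $\alpha\in\cO_{n-1}$), not on the set of source coordinates $v\in S_{n-1}$. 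For each target fiber $\alpha$, at most $K$ of the $L^{(v)}$ can project nontrivially there --- but different fibers may single out different $K$-element subsets of $S_{n-1}$, so this does not bound the total number of ``active'' source coordinates by $K$.

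The paper avoids this by decomposing the \emph{target} instead. One identifies $L^{S_{n-1}} \ltimes B^{\cO_n}$ (restricted over $\cO_{n-1}$) with $(L \ltimes B^{D_{i_n}})^{\cO_{n-1}}$, obtaining a family of homomorphisms $\phi_\alpha: L^{S_{n-1}} \to L \ltimes B^{D_{i_n}}$ indexed by $\alpha\in\cO_{n-1}$. The crucial step you are missing is then to use that $L_{n-1}$ acts \emph{transitively} on $\cO_{n-1}$: since $\sigma|_{L_{n-1}}$ is already fixed and $\sigma$ is equivariant, the $\phi_\alpha$ are all determined by a single $\phi_{\alpha_0}$. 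Only after this reduction to one homomorphism $L^{S_{n-1}} \to L \ltimes B^{D_{i_n}}$ does the commuting/perfectness argument legitimately bound the number of nontrivial $L^{(v)}$-images by $K$, yielding $\binom{|S_{n-1}|}{K}K!\,C_7^K$.
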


\begin{proof}
Let $L_n = L_{n-1} \ltimes  L^{S_{n-1}}$ and represent a given section $\phi$: $L_n \r L_n \ltimes B^{\cO_n}$ by a map of the form
\begin{align*}
\phi: \; L_{n-1} \ltimes  L^{S_{n-1}} & \longrightarrow (L_{n-1} \ltimes L^{S_{n-1}}) \ltimes  B^{\cO_n}
\\[1ex]
(x,y) &  \longrightarrow \big((x,y), f(x,y)\big).
\end{align*}
We analyze separately $\phi|_{L_{n-1}}$ and $\phi |_{L^{S_{n-1}}}$.

\medskip
We have $\phi(x,e) = ((x,e), f(x,e))$ where $f(x,e)$ is a map from ${\cO_n = \cO_{n-1} \times D_{i_n}}$ to $B$. Expressing that $\phi$ is a homomorphism, we see at once that for every $s \in D_{i_n}$, the map
\begin{align*}
L_{n-1} & \longrightarrow L_{n-1} \ltimes  B^{\cO_{n-1}}
\\[1ex]
x &  \longrightarrow \big(x,f(x,e) (\cdot,s)\big)
\end{align*}
is a section of $L_{n-1}$ in $L_{n-1} \ltimes  B^{\cO_{n-1}}$. The number of possible restrictions to $L_{n-1}$ of a section $\phi$ as above is therefore bounded by $a(n-1)^{|D_{i_n}|}$.

\medskip
Next, we analyze $\phi |_{L^{S_{n-1}}}$
\begin{align*}
L^{S_{n-1}} & \longrightarrow L^{S_{n-1}}  \ltimes B^{\cO_{n-1} \times D_{i_n}}
\\[1ex]
y &  \longrightarrow \big(y,f(e,y)\big) .
\end{align*}
Observe that $L^{S_{n-1}} \ltimes B^{\cO_{n-1} \ltimes  D_{i_n}}$ is isomorphic to $L^{S_{n-1} \backslash \cO_{n-1}} \ltimes  (L^{\cO_{n-1}} \ltimes B^{\cO_{n-1} \times D_{i_n}})$ and thus $\phi |_{L^{S_{n-1}}}$ is determined by
\begin{align*}
L^{S_{n-1}} & \longrightarrow L^{\cO_{n-1}}   \ltimes B^{\cO_{n-1} \ltimes D_{i_n}}
\\[1ex]
y &  \longrightarrow \big(y\big|_{\cO_{n-1}}, f (e,y)\big) .
\end{align*}
Now we observe that we have an isomorphism:
\begin{align*}
L^{\cO_{n-1}} \ltimes   B^{\cO_{n-1} \times  D_i} & \longrightarrow (L \ltimes  B^{D_{i_n}})^{\cO_{n-1}}
\\[1ex]
(z,g)&  \longrightarrow (z(\alpha), g_\alpha)_{\alpha \in \cO_{n-1}} ,
\end{align*}

\n
where $g_\alpha(s) : = g(\alpha,s)$, $s \in D_{i_n}$. Thus $\phi |_{L^{S_{n-1}}}$ is determined by a collection of homomorphisms
\begin{equation*}
\phi_\alpha : L^{S_{n-1}} \longrightarrow L \ltimes B^{D_{i_n}},
\end{equation*}

\n
parametrized by $\alpha \in \cO_{n-1}$. But $\phi |_{L^{S_{n-1}}}$ is the restriction of a homomorphism defined on $L_{n-1} \ltimes  L^{S_{n-1}}$ and the $L_{n-1}$-action on $\cO_{n-1}$ is transitive, we conclude that $\phi_\alpha$ is determined once $\phi_{\alpha_0}$ is fixed for some $\alpha_0$. Thus we are left with estimating the number of homomorphisms
\begin{equation*}
L^{S_{n-1}}  \longrightarrow L \ltimes  B^{D_{i_n}} .
\end{equation*}

\n
Given such a homomorphism $\varphi$, for every $\nu \in S_{n-1}$ the image of the corresponding $L$-factor is either trivial or non-abelian (since $L$ is perfect). Since the images via $\varphi$ of different factors commute, they cannot be sent to the same non-abelian subgroup of $L \ltimes B^{D_{i_n}}$.

\medskip
Thus we conclude that the number of such homomorphisms is bounded by
\begin{equation*}
{|S_{n-1}| \choose K} \cdot K! \,C_7^K \,.
\end{equation*}
This concludes the proof of the lemma.
\end{proof}

\begin{lemma}\label{lem3.2}
$a(n) \le C_8^{|\cO_n|}$, where $C_8 = C_7^{\frac{K+1}{2}} |D|^K$.
\end{lemma}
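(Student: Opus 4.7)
The plan is induction on $n$, using Lemma \ref{lem3.1} and exploiting the doubly-exponential growth of $|\cO_n|$ to absorb the arithmetically-growing factors that appear on the right-hand side of the recursion.

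For the base case $n=1$, the definition of $C_7$ gives $a(1) \le C_7$ directly, since a section of $L_1 = L$ in $L \ltimes B^{\cO_1}$ is in particular a homomorphism. Hypothesis b) of Theorem \ref{theo0.1} forces $|D_{i_1}| \ge 2$, and a direct check shows $C_8^2 \ge C_7$ (indeed $C_8^2 = C_7^{K+1}|D|^{2K} \ge C_7$), so $a(1) \le C_8^{|\cO_1|}$.

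For the inductive step, the key move is to normalize. Setting $g(n) := \log a(n)/|\cO_n|$, the goal becomes $g(n) \le \log C_8$. Applying the crude bound $\binom{|S_{n-1}|}{K}K! \le |S_{n-1}|^K = |D|^{(n-1)K}$ in Lemma \ref{lem3.1} and dividing by $|\cO_n| = |D_{i_n}|\,|\cO_{n-1}|$, one gets
\[
g(n) \le g(n-1) + \frac{K(n-1)\log|D| + K \log C_7}{|\cO_n|}.
\]
Iterating from $k=2$ to $n$ and using $|\cO_k| \ge 2^k$ (again a consequence of hypothesis b)), one obtains
\[
g(n) \le g(1) + K\log|D|\sum_{k=2}^{\infty}\frac{k-1}{2^k} + K\log C_7\sum_{k=2}^{\infty}\frac{1}{2^k}.
\]

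The computation closes with the standard identities $\sum_{k \ge 2}(k-1)/2^k = 1$ and $\sum_{k \ge 2}1/2^k = 1/2$, together with $g(1) \le \tfrac{1}{2}\log C_7$ (from $a(1) \le C_7$ and $|\cO_1| \ge 2$). Assembling, $g(n) \le \tfrac{K+1}{2}\log C_7 + K \log|D| = \log C_8$, proving the claim. The only step that really requires care is the telescoping: it converges precisely because the $|\cO_k|$ grow at least like $2^k$, which dominates the $O(k)$ numerators. Without hypothesis b) the $|\cO_k|$ need not grow at all, and no bound of the form $C^{|\cO_n|}$ would survive.
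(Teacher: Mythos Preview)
Your proof is correct and follows essentially the same route as the paper's: both iterate the recursion of Lemma~\ref{lem3.1} after replacing $\binom{|S_{n-1}|}{K}K!$ by $|D|^{K(n-1)}$, and both control the accumulated error terms using $|\cO_k|\ge 2^k$ together with $a(1)\le C_7$. The only difference is cosmetic: the paper unrolls the multiplicative recursion directly and bounds the resulting exponents $\alpha_n,\beta_n$ by geometric-series estimates, whereas you pass to $g(n)=\log a(n)/|\cO_n|$ and telescope additively --- the same computation in logarithmic coordinates.
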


\begin{proof}
Set $|D_{i_j}| = d_j$, the $n$-tuple $(i_1,\dots,i_n)$ being given. In Lemma \ref{lem3.1} estimate
\begin{equation*}
{|S_{n-1}| \choose K}  K!  \le |D|^{K(n-1)}  \,.
\end{equation*}
Iterating the inequality thus obtained:
\begin{equation*}
a(n) \le a(n-1)^{d_n} \,|D|^{K(n-1)} C_7^K
\end{equation*}
we get
\begin{equation*}
a(n) \le a(1)^{d_2 \dots d_n} \,|D|^{K\cdot \alpha_n} (C_7^K)^{\beta_n},
\end{equation*}
where
\begin{align*}
\alpha_n & = d_3 \dots d_n + 2 d_4 \dots d_n + \dots + (n-2) \,d_n + (n-1)
\\
\beta_n & = d_3 \dots d_n + d_4 \dots d_n + \dots  + d_n + 1 .
\end{align*}
Then
\begin{align*}
\alpha_n & = d_3 \dots d_n  \Big[1 + \dis\frac{2}{d_3} + \dis\frac{3}{d_3 \,d_4} + \dots + \dis\frac{n-1}{d_3 \dots d_n}\Big]
\\[1ex]
& \le d_3 \dots d_n  \Big[1 + \dis\frac{2}{2} + \dis\frac{3}{2^2} + \dots + \dis\frac{n-1}{2^{n-2}}\Big]
\\[1ex]
& \le 4d_3 \dots d_n
\intertext{and}
\beta_n & = d_3 \dots d_n  \Big[1 + \dis\frac{1}{d_3} + \dis\frac{1}{d_3 \,d_4} + \dots + \dis\frac{1}{d_3 \dots d_4}\Big]
\\[1ex]
& \le 2d_3 \dots d_n \,.
\intertext{As a result we obtain}
a(n) & \le \big(a(1)^{1/d_1}\big)^{|\cO_n|} \big(|D|^{K/d_1\,d_2}\big)^{4| \cO_n|}  \big(C_7^{K/d_1\,d_2}\big)^{2| \cO_n|}
\\[1ex]
& \le \big(a(1)^{1/2} |D|^K \;C_7^{K/2}\big)^{| \cO_n|}
\\[1ex]
& \le \big(C_7^{(1+ K)/2} |D|^K\big)^{| \cO_n|} \,.
\end{align*}
\end{proof}

\medskip\n
 Every orbit $\cO$ in $I_n$ contributes to $\zeta_{L_{n+1}/L_{n}}(k)$ at most
\begin{equation*}
C_9 \cdot \dis\frac{C_8^{|\cO |}}{(2^{| \cO |})^k} = C_9 \Big(\dis\frac{C_8}{2^k}\Big)^{| \cO |} ,
\end{equation*}

\n
where $C_9$ is an upper bound on the number of quotient groups of $L$. Now choose $k \ge 1$ such that
\begin{equation*}
\dis\frac{C_8}{2^k} < 1.
\end{equation*}
Then the sum over all orbits in $I_n$ of the contribution in Case 4 is bounded by
\begin{equation*}
\ell^n \cdot \Big(\dis\frac{C_8}{2^k} \Big)^{2^n}
\end{equation*}
and with this choice of $k$, the sum over $n \ge 1$ converges.

\medskip
This concludes the proof the theorem.
\nocite{DiMo}
\nocite{Vann}
\nocite{Quick1}
\nocite{Quick2}
\nocite{Bondarenko}
\nocite{Mann1}
\nocite{MaSh}

\bibliographystyle{alpha}
\bibliography{BMbiblio}
\end{document}